\setlist[enumerate]{leftmargin=.5in}
\setlist[itemize]{leftmargin=.5in}
\crefname{hypothesis}{Hypothesis}{Hypotheses}
\title{Practical Sketching-Based Randomized Tensor Ring Decomposition\thanks{
This work was funded by the National Natural Science Foundation of China (No. 11671060) and the Natural Science Foundation of Chongqing, China (No. cstc2019jcyj-msxmX0267).}}
\author{Yajie Yu\thanks{College of Mathematics and Statistics, Chongqing University, Chongqing 401331, P.R. China
  (\email{zqyu@cqu.edu.cn;\ lihy.hy@gmail.com or hyli@cqu.edu.cn}).}
\and Hanyu Li
\footnotemark[2]}
\DeclareMathOperator{\FFT}{FFT}
\DeclareMathOperator{\TR}{TR}
\DeclareMathOperator{\trace}{trace}
\newcommand{\bfA}{{\bf A}}
\newcommand{\bfB}{{\bf B}}
\newcommand{\bfD}{{\bf D}}
\newcommand{\bfF}{{\bf F}}
\newcommand{\bfG}{{\bf G}}
\newcommand{\bfM}{{\bf M}}
\newcommand{\bfQ}{{\bf Q}}
\newcommand{\bfS}{{\bf S}}
\newcommand{\bfT}{{\bf T}}
\newcommand{\bfU}{{\bf U}}
\newcommand{\bfX}{{\bf X}}
\newcommand{\bfZ}{{\bf Z}}
\newcommand{\bfa}{{\bf a}}
\newcommand{\bfb}{{\bf b}}
\newcommand{\mcP}{{\mathcal{P}}}
\newcommand{\mcS}{{\mathcal{S}}}
\newcommand{\bbC}{{\mathbb{C}}}
\newcommand{\bbR}{{\mathbb{R}}}
\newcommand{\mfI}{{\mathfrak{I}}}
\newcommand{\mfR}{{\mathfrak{R}}}
\newcommand{\tensor}[1]{\boldsymbol{\mathcal{#1}}}
\newcommand{\mat}[1]{\mathbf{#1}}
\newcommand{\vect}[1]{\bm{#1}}
\newcommand{\bigO}[1]{\mathcal{O}\left( #1 \right)}
\newcommand{\LineRef}[1]{\hyperref[#1]{Line~\ref{#1}}}
\begin{document}
\begin{sloppypar}
\maketitle

\begin{abstract}
Based on sketching techniques, we propose two randomized algorithms for tensor ring (TR) decomposition. Specifically, by defining new tensor products and investigating their properties, we apply the Kronecker sub-sampled randomized Fourier transform and TensorSketch to the alternating least squares problems derived from the minimization problem of TR decomposition to devise the randomized algorithms. From the former, we find an algorithmic framework based on random projection for randomized TR decomposition. Theoretical results on sketch size and complexity analyses for the two algorithms are provided. We compare our proposals with the state-of-the-art method using both synthetic and real data. Numerical results show that they have quite decent performance in accuracy and computing time. 
\end{abstract}

\begin{keywords}
tensor ring decomposition, randomized algorithm, alternating least squares problem, sketching, Kronecker sub-sampled randomized Fourier transform, TensorSketch
\end{keywords}

\begin{MSCcodes}
15A69, 68W20
\end{MSCcodes}

\section{Introduction}
\label{sec:introduction}
Tensor decompositions represent a higher-order tensor by multilinear operations over the latent factors, which have found many applications in machine learning, signal processing, chemometrics and so on; see the detailed review in \cite{kolda2009TensorDecompositions, sidiropoulos2017tensor,cichocki2015TensorDecompositions}. Two popular decompositions are the Canonical Polyadic (CP) decomposition and the Tucker decomposition. The former has the parameters scaling linearly with the tensor order and the latter is easier to work numerically. So these two decompositions have wide applications \cite{kolda2009TensorDecompositions,sidiropoulos2017tensor,cichocki2015TensorDecompositions}. However, they also suffer from some limitations. For example, finding the CP decomposition of a tensor is an NP-hard problem; the number of parameters of Tucker decomposition still scales exponentially with the tensor order. Tensor train (TT) decomposition, which is also known as the matrix product state (MPS) with open boundary conditions in quantum physics \cite{affleck2004ValenceBond,perez-garcia2007MatrixProduct}, came into being to address these challenges \cite{oseledets2011TensorTrainDecomposition}. However, it also has some limitations such as the constraint on TT-ranks, the fixed pattern of TT-ranks, and the strict order of TT-cores \cite{zhao2016TensorRing}. For these reasons, Zhao et al. \cite{zhao2016TensorRing} introduced the tensor ring (TR) decomposition, i.e., the MPS with periodic boundary conditions, which has the circular dimensional permutation invariance and hence can overcome some shortcomings of TT decomposition. 

Specifically, the TR decomposition in the element-wise form of the tensor $\tensor{X} \in \bbR^{I_1 \times I_2 \times \cdots \times I_N}$ can be represented as follows:
\begin{align*}
	\tensor{X}(i_1, \cdots, i_N)&={\trace} \left(\bfG_1(i_1) \bfG_2(i_2) \cdots \bfG_N(i_N)\right)
	={\trace} \left(\prod_{n=1}^N \bfG_n(i_n)\right),
\end{align*}
where $\bfG_n(i_n) = \tensor{G}_n(:,i_n,:) \in \bbR^{R_n \times R_{n+1}}$ is the $i_n$-th \emph{lateral slice} of the \emph{core tensor (TR-core)} $\tensor{G}_n \in \bbR^{R_n \times I_n \times R_{n+1}}$. Note that a \emph{slice} is an 2-order section, i.e., a matrix, of a tensor obtained by fixing all the tensor indices but two, and $ R_{N+1}=R_{1} $. The sizes of TR-cores, i.e., $R_k$ with $k=1,\cdots,N$, are called \emph{TR-ranks}. Additionally, we use the notation $\TR \left( \{\tensor{G}_n\}_{n=1}^N \right)$ to denote the TR decomposition of a tensor. 
The problem of fitting $\TR \left( \{\tensor{G}_n\}_{n=1}^N \right)$ to a tensor $\tensor{X}$ can be written as the following minimization problem:
\begin{equation}
	\label{eq:trmin}
	\mathop{\arg\min}_{\tensor{G}_1, \cdots, \tensor{G}_N} \| \TR \left( \{\tensor{G}_n\}_{n=1}^N \right) - \tensor{X} \|_F,
\end{equation} 
where $\| \cdot \|_F$ denotes the Frobenius norm of a matrix or tensor. One of the common approaches to this problem is SVD based, and another one uses the alternating least squares (ALS) \cite{zhao2016TensorRing}. We use the abbreviations TR-SVD and TR-ALS for these two methods in the following text. 

Due to the expensive computational costs of the above two methods, it is crucial to design efficient randomized algorithms for computing the TR decompositions of large-scale tensors.
As far as we know, the first randomized algorithm for TR decomposition was proposed by Yuan et al. \cite{yuan2019RandomizedTensor} who first applied the randomized Tucker decomposition to the original tensor and then the TR-SVD or TR-ALS to the core tensor of the Tucker decomposition. Finally, the original TR-cores were recovered by combining the factor matrices of the randomized Tucker decomposition and the TR-cores of the core tensor.
Later, Ahmadi-Asl et al. \cite{ahmadi-asl2020RandomizedAlgorithms} summarized some algorithms of TR decomposition and presented several randomized algorithms with randomized SVD. Recently, building on TR-ALS, Malik and Becker \cite{malik2020SamplingBased} provided a sampling method based on leverage scores, which can avoid forming the large coefficient matrices in TR-ALS via the multi-index and can outperform the randomized method given in \cite{yuan2019RandomizedTensor}. Moreover, Malik \cite{malik2021MoreEfficient} also provided a new approach to approximate the leverage scores and devised the corresponding sampling algorithm for TR decomposition.

As we know, there are two main types of methods for designing randomized algorithms \cite{drineas2017LecturesRandomized,clarkson2017LowRankApproximation,martinsson2020RandomizedNumerical,drineas2011FasterLeast,woodruff2014SketchingTool}. One is \emph{random sampling}, and the other is \emph{random projection}. The latter is often referred to as the sketching-based method. Actually, both of the two types of methods can be regarded as sketching-based methods because the processed matrix in these two methods can be written in a unified form. That is, the matrix is multiplied by a matrix with random entries.  Besides TR decomposition, the above two types of methods have also been applied to other tensor decompositions such as CP, Tucker, and TT decompositions.
For example, Battaglino et al. \cite{battaglino2018PracticalRandomized} first investigated the uniform sampling for CP decomposition comprehensively, and then Larsen and Kolda \cite{larsen2020PracticalLeverageBased} considered its leverage-based sampling. The latter inspires the works in \cite{malik2020SamplingBased,malik2021MoreEfficient} on TR decomposition mentioned above\footnote{In \cite{malik2021MoreEfficient}, the author also provided the sampling algorithm for CP decomposition.}. Again, it should be emphasized that the sampling methods used in these works  can avoid forming the large coefficient matrices and they carry out sample selection in factor matrices or core tensors. For random projection, scholars mainly used Gaussian random matrix, sub-sampled randomized Hadamard transform (SRHT), sub-sampled randomized Fourier transform (SRFT),  CountSketch (CS) and its variant TensorSketch to design the randomized algorithms for CP, Tucker, and TT decompositions; see, e.g., \cite{zhou2014DecompositionBig,battaglino2018PracticalRandomized,che2019RandomizedAlgorithms,minster2020RandomizedAlgorithms,che2020ComputationLow,che2021EfficientRandomized,ma2021FastAccurate}. In particular, in \cite{battaglino2018PracticalRandomized}, the SRFT is applied to each factor matrix of CP decomposition rather than the large coefficient matrix in the ALS (CP-ALS) derived from the minimization problem of CP decomposition; in \cite{ma2021FastAccurate}, the method is mainly built on a formula that combines the TensorSketch and the Kronecker-like form of coefficient matrix. 

In this paper, motivated by \cite{malik2020SamplingBased}  and the warning in \cite[Section 9]{martinsson2020RandomizedNumerical} that leverage score sampling is rarely a competitive method in practice and may perform markedly worse, we consider the sketching-based methods for TR decomposition on the basis of TR-ALS.
Specifically, in our algorithm, we will use Kronecker SRFT (KSRFT), which is similar to what was done in \cite{battaglino2018PracticalRandomized} for CP decomposition but with some quite remarkable improvements.
This is mainly because unlike the coefficient matrices in CP-ALS \cite{battaglino2018PracticalRandomized}, it is difficult to straighten the structure of the coefficient matrices in TR-ALS; see \cref{eq:tr_als} below. To this end, we define a new tensor product and find an elegant property of the product. Moreover, these new findings make us avoid forming the
full coefficient matrices, the large sketching matrices and the matrix multiplications between them, and enable us to detect a framework for designing the randomized algorithm for TR decomposition; see \cref{rem:equivalent} below for details.
Furthermore, as done in \cite{ma2021FastAccurate}, we also consider TensorSketch. 
However,  for our problem, it is more complicated. To this end, we define another new product of tensors and obtain a graceful formula. Therefore, in the ideas on sketching methods, the work is mainly inspired by \cite{battaglino2018PracticalRandomized} and \cite{ma2021FastAccurate} in essence.

The remainder of this paper is organized as follows. \Cref{sec:preliminaries} provides some preliminaries.
In \Cref{sec:main}, we review some existing algorithms for TR decomposition and present our new findings including new tensor products and their properties, structure analysis of the coefficient matrices, and a formula for TensorSketch. The algorithms based on KSRFT and TensorSketch and their theoretical analyses and computational complexities are given in \Cref{sec:srft_tr,sec:ts_tr}, respectively. In \Cref{sec:numerical}, we compare our algorithms with the existing ones using both synthetic and real data. Finally, the concluding remarks of the whole paper are presented. 

\section{Preliminaries}
\label{sec:preliminaries}

We first introduce some necessary definitions.
\begin{definition}[Multi-index \cite{dolgov2014AlternatingMinimal}]
	\label{def:idx}
	For a positive integer $I$, let $[I] \xlongequal{def} \{ 1, \cdots, I \}$. 
	For indices $i_1 \in [I_1], \cdots, i_N \in [I_N]$, a multi-index $i = \overline{i_1 i_2 \cdots i_N}$ refers to an index which takes all possible combinations of values of the indices, $i_1, i_2, \cdots, i_N$, for $i_n = 1,2, \cdots, I_n$ with $n = 1,2, \cdots, N$ in a specific order.  Two common orders are as follows.
	
	\begin{enumerate}
		\item Little-endian convention (reverse lexicographic ordering)
		$$\overline{i_1 i_2 \cdots i_N} = i_1+(i_2-1)I_1 +(i_3-1)I_1 I_2+ \cdots +(i_N-1)I_1 \cdots I_{N-1}.$$  
		\item Big-endian convention (colexicographic ordering)
		$$\overline{i_1 i_2 \cdots i_N} = i_N + (i_{N-1}-1)I_N+(i_{N-2}-1)I_N I_{N-1}+ \cdots +(i_1-1)I_2 \cdots I_N.$$
	\end{enumerate}
	In this paper, unless otherwise stated, we will use the little-endian convention.
\end{definition}

\begin{definition}[Kronecker product]
\label{def:kronecker}
	The \textbf{Kronecker product} of two matrices $\bfA=(a_{ij}) \in \bbR^{I_1 \times J_1}$ and $\bfB \in \bbR^{I_2 \times J_2}$ is a matrix of size $I_1 I_2 \times J_1 J_2$ denoted by $\bfA \otimes \bfB$ and defined as
	\begin{equation*}
	\bfA \otimes \bfB = 
	\begin{bmatrix}
	a_{11}\bfB & \cdots & a_{1J_1}\bfB \\
	\vdots & \ddots & \vdots \\
	a_{I_11}\bfB & \cdots & a_{I_1J_1}\bfB
	\end{bmatrix}.
	\end{equation*}
	
	If using the notation in \Cref{def:idx}, the element-wise form of the Kronecker product is as follows:
	\begin{equation*}
	(\bfA \otimes \bfB) (\overline{i_2 i_1},\overline{j_2 j_1}) = \bfA(i_1,j_1) \bfB(i_2,j_2).
	\end{equation*}
\end{definition}

\begin{definition}[Khatri-Rao product]
\label{def:kr}
	The \textbf{Khatri-Rao product} of two matrices $\bfA \in \bbR^{I_1 \times J}$ and $\bfB \in \bbR^{I_2 \times J}$ is a matrix of size $I_1 I_2 \times J$ denoted by $\bfA \odot \bfB$ and defined as
	\begin{equation*}
	\bfA \odot \bfB = [\bfa_1 \otimes \bfb_1, \bfa_2 \otimes \bfb_2, \cdots, \bfa_J \otimes \bfb_J],
	\end{equation*}
	where $\bfa_n$ and $\bfb_n$ with $n \in [J]$ are the $n$-th columns of the matrices $\bfA$ and $\bfB$, respectively.
	
	If using the notation in \Cref{def:idx}, the element-wise form of the Khatri-Rao product is as follows:
	\begin{equation*}
	(\bfA \odot \bfB) ( \overline{i_2 i_1},j) = \bfA(i_1,j) \bfB(i_2,j).
	\end{equation*}
\end{definition}

\begin{definition}[Mode-$n$ product \cite{cichocki2016TensorNetworks,kolda2009TensorDecompositions}]
	\label{def:moden_product}
	The \textbf{mode-n product} of a tensor $\tensor{X} \in \bbR^{I_1 \times I_2 \cdots \times I_N}$ with a matrix $\bfU \in \bbR^{J \times I_n}$ is a tensor of size $I_1 \times \cdots \times I_{n-1} \times J \times I_{n+1} \times \cdots \times I_N$ denoted by $\tensor{X} \times_n \bfU$ and defined as
	\begin{equation*}
	(\tensor{X} \times_n \bfU)_{i_1 \cdots i_{n-1} j i_{n+1} \cdots i_N} = \sum_{i_n = 1}^{I_n} x_{i_1 i_2 \cdots i_N}u_{j i_n}.
	\end{equation*}
\end{definition}

\begin{definition}[Mode-$n$ unfolding \cite{zhao2016TensorRing,cichocki2016TensorNetworks}]
	\label{def:moden}
	The \textbf{mode-n unfolding} of a tensor $\tensor{X} \in \bbR^{I_1 \times I_2 \cdots \times I_N}$ is the matrix $\bfX_{[n]}$ of size $I_n \times \prod_{j \ne n} I_j$ defined element-wise via
	\begin{equation*}
	\bfX_{[n]}(i_n, \overline{i_{n+1} \cdots i_N i_1 \cdots i_{n-1}})=\tensor{X}(i_1, \cdots, i_N).
	\end{equation*}
\end{definition}

\begin{definition}[Classical mode-$n$ unfolding \cite{zhao2016TensorRing,cichocki2016TensorNetworks,kolda2009TensorDecompositions}]
	The \textbf{classical mode-n unfolding} of a tensor $\tensor{X} \in \bbR^{I_1 \times I_2 \cdots \times I_N}$ is the matrix $\bfX_{(n)}$ of size $I_n \times \prod_{j \ne n} I_j$ defined element-wise via
	\begin{equation*}
	\bfX_{(n)}(i_n, \overline{i_1 \cdots i_{n-1} i_{n+1} \cdots i_N})=\tensor{X}(i_1, \cdots, i_N).
	\end{equation*}
\end{definition}

For the tensor in a series of mode-$n$ product form, we can rewrite its mode-$n$ 
and classical mode-$n$ unfolding matrices using the following formulas.

\begin{proposition}[\cite{cichocki2016TensorNetworks}]
	Let the 
	tensor $\hat{\tensor{X}}$ 
	have the form $\hat{\tensor{X}} = \tensor{X} \times_1 \bfU_1 \times_2 \bfU_2 \cdots \times_N \bfU_N$, where $\tensor{X} \in \bbR^{I_1 \times I_2 \cdots \times I_N}$, and $\bfU_n \in \bbR^{J_n \times I_n}$ for $n = {1, \cdots, N}$. Then
	\begin{align*}
	\hat{\bfX}_{[n]}& = \bfU_n \bfX_{[n]} \left( \bfU_{n-1} \otimes \cdots \otimes \bfU_{1} \otimes \bfU_{N} \otimes \cdots \otimes \bfU_{n+1} \right)^\intercal,\\
	\hat{\bfX}_{(n)} &= \bfU_n \bfX_{(n)} \left( \bfU_N \otimes \cdots \otimes \bfU_{n+1} \otimes \bfU_{n-1} \otimes \cdots \otimes \bfU_1 \right)^\intercal.
	\end{align*}
\end{proposition}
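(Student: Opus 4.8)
The plan is to prove both identities by a direct element-wise computation, reducing everything to the scalar definitions of the mode-$n$ product, the two unfoldings, and the Kronecker product. First I would unwind $\hat{\tensor{X}}$: iterating \Cref{def:moden_product} over the $N$ modes (the order is irrelevant since the products act on distinct modes) gives
\[
\hat{\tensor{X}}(j_1, \ldots, j_N) = \sum_{i_1=1}^{I_1} \cdots \sum_{i_N=1}^{I_N} \tensor{X}(i_1, \ldots, i_N) \prod_{m=1}^{N} \bfU_m(j_m, i_m).
\]

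For the first formula I would set $\bfV \xlongequal{def} \bfU_{n-1} \otimes \cdots \otimes \bfU_1 \otimes \bfU_N \otimes \cdots \otimes \bfU_{n+1}$ and evaluate $\bfU_n \bfX_{[n]} \bfV^\intercal$ at the entry $(j_n, \overline{j_{n+1} \cdots j_N j_1 \cdots j_{n-1}})$. Expanding the two matrix multiplications produces a sum over a row index $i_n$ of $\bfX_{[n]}$, weighted by $\bfU_n(j_n,i_n)$, and a sum over the column multi-index of $\bfX_{[n]}$, i.e.\ over $(i_{n+1},\ldots,i_N,i_1,\ldots,i_{n-1})$, weighted by the matching entry of $\bfV$. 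Rewriting $\bfX_{[n]}$ via \Cref{def:moden} as an entry of $\tensor{X}$ and comparing with the scalar form of $\hat{\tensor{X}}$ above, the identity reduces to showing
\[
\bfV\!\left(\overline{j_{n+1}\cdots j_N j_1 \cdots j_{n-1}},\ \overline{i_{n+1}\cdots i_N i_1 \cdots i_{n-1}}\right) = \prod_{m \ne n} \bfU_m(j_m, i_m),
\]
which follows by induction on the number of factors from the two-matrix element-wise rule of \Cref{def:kronecker}, provided the little-endian convention fixed in \Cref{def:idx} is used consistently: the Kronecker chain appears ``reversed'' relative to the multi-index order $\overline{i_{n+1}\cdots i_N i_1 \cdots i_{n-1}}$ precisely because little-endian lists the fastest-varying index first.

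The second formula is handled identically; the only difference is that the column multi-index of $\bfX_{(n)}$ is $\overline{i_1 \cdots i_{n-1} i_{n+1} \cdots i_N}$, which pins the Kronecker factors into the order $\bfU_N \otimes \cdots \otimes \bfU_{n+1} \otimes \bfU_{n-1} \otimes \cdots \otimes \bfU_1$. I expect the only real obstacle to be bookkeeping --- tracking how the cyclic (resp.\ natural) ordering of indices in the unfolding interacts with the endianness convention so that the order of the Kronecker factors comes out exactly as stated; once the two-factor base case and this ordering are nailed down, the general case is a routine induction. Alternatively, since the statement is attributed to \cite{cichocki2016TensorNetworks}, one could simply cite it and only check that the factor ordering matches this paper's conventions.
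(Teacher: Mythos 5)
The paper does not prove this proposition at all---it is imported verbatim from \cite{cichocki2016TensorNetworks} as a known fact---so there is no internal proof to compare against. Your element-wise verification is correct and complete in outline: the reduction to the single identity $\bfV(\overline{j_{n+1}\cdots j_N j_1\cdots j_{n-1}},\overline{i_{n+1}\cdots i_N i_1\cdots i_{n-1}})=\prod_{m\ne n}\bfU_m(j_m,i_m)$ is the right move, and you have correctly identified the one genuine pitfall, namely that under the little-endian convention of \Cref{def:idx} together with the element-wise Kronecker rule of \Cref{def:kronecker} the \emph{rightmost} factor of the Kronecker chain must correspond to the \emph{first-listed} (fastest-varying) index of the column multi-index, which is exactly why $\bfU_{n+1}$ sits at the right end of the first chain and $\bfU_1$ at the right end of the second.
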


\begin{definition}[Subchain tensor \cite{zhao2016TensorRing}]
	\label{def:subchain}
	Let $\tensor{X} = \TR \left( \{\tensor{G}_n\}_{n=1}^N \right) \in \bbR^{I_1 \times I_2 \cdots \times I_N}$. The \textbf{subchain tensor} $\tensor{G}^{\ne n} \in \bbR^{R_{n+1} \times \prod_{j \ne n} I_j \times R_n}$ is the merging of all TR-cores expect the $n$-th one and can be written slice-wise via
	\begin{equation}
	\label{eq:subchain_idx}
	\bfG^{\ne n}(\overline{i_{n+1} \cdots i_N i_1 \cdots i_{n-1}})=\prod_{j=n+1}^{N} \bfG_j(i_j) \prod_{j=1}^{n-1} \bfG_j(i_j).
	\end{equation}
\end{definition}	

In the following, we introduce KSRFT and TensorSketch. As mentioned in \Cref{sec:introduction}, we will use them to devise the corresponding algorithms. 

\begin{definition}[KSRFT \cite{battaglino2018PracticalRandomized, jin2021FasterJohnson}]\label{def:SRFT}
	The KSRFT is defined as
	\begin{equation*}
	\mathbf{\Phi} = \sqrt{\frac{\prod_{j=1}^N I_j}{m}}
	\bfS  \left(\bigotimes_{j=1}^N (\bfF_j \bfD_j)\right),
	\end{equation*}
	where
	\begin{itemize}
		\item $\bfS \in \bbR^{m \times \prod_{j=1}^N I_j}$ : $m$ rows  of the $\prod_{i=j}^N I_j \times \prod_{j=1}^N I_j$ identity matrix drawn uniformly at random with replacement from the identity matrix;
		\item $\bfF_j \in \bbC^{I_j \times I_j}$ : (unitary) discrete Fourier transform of dimension $I_j$ (also called DFT/FFT matrix);
		\item $\bfD_j \in \bbR^{I_j \times I_j}$ : a diagonal matrix with independent random diagonal entries drawn uniformly from $\{+1,-1\}$ (also called random sign-flip operator).
	\end{itemize}
\end{definition}
\begin{remark}
	Actually, the KSRFT is a special Kronecker fast Johnson-Lindenstrauss transform, i.e., KFJLT, proposed and studied in \cite{battaglino2018PracticalRandomized, jin2021FasterJohnson,malik2020GuaranteesKronecker}.
\end{remark}

\begin{definition}[TensorSketch \cite{paghrasmus2013CompressedMatrix,diao2018sketching}]
\label{def:tensersketch_tr}
	The TensorSketch is defined  as $\mat{T}= \mat{\Omega D}$, where
	\begin{itemize}
		\item $\mat{\Omega} \in \bbR^{m\times \prod_{j=1}^N I_j}$: a matrix with $\mat{\Omega}(j,i) = 1$ if $j=H(i)$ for all $ i\in\left[\prod_{j=1}^N I_j\right]$ and $\mat{\Omega}(j,i) = 0$ otherwise;
		\item $\mat{D} \in \bbR^{\prod_{j=1}^N I_j \times \prod_{i=j}^N I_j}$: a diagonal matrix with $\mat{D}(i,i) = S(i)$.
	\end{itemize}
	In the definitions of $\mat{\Omega}$ and $\mat{D}$,
	\begin{eqnarray*}
		H & :& [I_1] \times [I_2] \times \cdots \times [I_N] \rightarrow [m] : (i_1, \ldots , i_N ) \mapsto
		\left(
		\sum_{n=1}^N
		(H_n(i_n) - 1) \mod m
		\right) + 1,\\
		S & : & [I_1] \times [I_2] \times \cdots \times [I_N] \rightarrow \{-1, 1\} :  (i_1, \ldots , i_N ) \mapsto
		\prod_{n=1}^N S_n(i_n),
	\end{eqnarray*}
	where each $H_n$ for $n\in[N]$ is a 3-wise independent hash map that maps $[I_n]\rightarrow[m]$, and each $S_n$ is a 4-wise independent hash map that maps $[I_n]\rightarrow\{-1,1\}$.
	Recall that a hash map is $k$-wise independent if all the designated $k$ keys are independent random variables.
\end{definition}

\begin{remark}
\label{rem:tensorsketch}
	In \Cref{def:tensersketch_tr}, we use the little-endian convention to compute $$H(i)=H(\overline{i_1i_2 \cdots i_N}) \textrm{ and } S(i)=S(\overline{i_1i_2 \cdots i_N}),$$ 
	while the definition of the TensorSketch in \cite{diao2018sketching,malik2020FastRandomized,malik2018LowRankTucker,ma2021FastAccurate} uses the big-endian convention.
\end{remark}

\section{Existing algorithms and new findings}
\label{sec:main}
We first introduce TR-ALS mentioned in \Cref{sec:introduction} and the main idea of the randomized algorithm proposed in \cite{malik2020SamplingBased}.

According to Theorem 3.5 in \cite{zhao2016TensorRing}, the objective in \cref{eq:trmin} can be rewritten as the following $N$ subproblems
\begin{equation}
\label{eq:tr_als}
\mathop{\arg\min}_{\bfG_{n(2)}} \|\bfG_{[2]}^{\ne n} \bfG_{n(2)}^\intercal-\bfX_{[n]}^\intercal \|_F,\ n=1,\cdots, N.
\end{equation}
The so-called TR-ALS is a method that keeps all cores fixed except the $n$-th one and finds the solution to the least squares problem \cref{eq:tr_als} with respect to it. We summarize TR-ALS in \Cref{alg:tr_als}.

\begin{algorithm}
	\caption{TR-ALS \cite{zhao2016TensorRing}}
	\label{alg:tr_als}
	\begin{algorithmic}[1]\footnotesize
		\Function{$\{\tensor{G}_n\}_{n=1}^N$= TR-ALS}{$\tensor{X}, R_1, \cdots, R_N$} 
		\Comment $\tensor{X}$ is the input tensor
		
		\Comment $R_1, \cdots, R_N$ are the TR-ranks
		\State Initialize cores $\tensor{G}_2, \cdots, \tensor{G}_N$ \label{line:als_init}
		\Repeat
		\For{$n = 1, \cdots, N$}
		\State Compute $\bfG_{[2]}^{\ne n}$ from cores \label{line:als_subchain}
		\State Update $\tensor{G}_n = \mathop{\arg\min}_{\tensor{Z}} \| \bfG_{[2]}^{\ne n} \bfZ_{(2)}^\intercal - \bfX_{[n]}^\intercal \|_F$ \label{line:als_ls}
		\EndFor
		\Until{termination criteria met}
		\State \Return  $\tensor{G}_1, \cdots, \tensor{G}_N$
		\EndFunction
	\end{algorithmic}
\end{algorithm}

Since the size of the matrix $\bfG_{[2]}^{\ne n}$ is $\prod_{j \ne n} I_j \times R_nR_{n+1}$, it is very expensive to solve the least squares problem \cref{eq:tr_als} directly. To tackle this problem, Malik and Becker \cite{malik2020SamplingBased} proposed a random sampling variant of  TR-ALS based on importance sampling. The main advantage of the variant is that it can obtain the sketch with sampling without forming the large matrix $\bfG_{[2]}^{\ne n}$. 
This is mainly due to the authors noting that the $\overline{i_{n+1} \cdots i_N i_1 \cdots i_{n-1}}$-th lateral slice of the subchain tensor $\tensor{G}^{\ne n}$ can be given by a sequence of matrix multiplications, that is, \cref{eq:subchain_idx} in \Cref{def:subchain}.
So they devised an algorithm to find the sampled sketch of the tensor $\tensor{G}^{\ne n}$ and hence of the matrix $\bfG^{\ne n}_{[2]}$ by extracting $m$ lateral slices from each of the cores $\tensor{G}_{j}$. The algorithm of this sampling method is summarized in \Cref{alg:sst}. With this algorithm and the leverage scores or their approximations of the classical mode-$2$ unfolding matrices of TR-cores, the random sampling variant of TR-ALS, i.e.,  TR-ALS-Sampled, can be derived \cite{malik2020SamplingBased}. 
We summarize it in \Cref{alg:tr_als_sampled}.

\begin{algorithm}
	\caption{Sampled Subchain Tensor (SST), summarized from \cite{malik2020SamplingBased}}
	\label{alg:sst}
	\begin{algorithmic}[1]\footnotesize
		\Function{$\tensor{G}^{\ne n}_S$= SST}{$\texttt{idxs}, \tensor{G}_{n+1}, \cdots, \tensor{G}_{N}, \tensor{G}_{1}, \cdots, \tensor{G}_{n-1}$} 
		\Comment $\tensor{G}_n \in \bbR^{R_n \times I_n \times R_{n+1}}, n \in [N]$
		
		\Comment $\texttt{idxs} \in \bbR^{m \times (N-1)}$ is from the set of tuples $\{ i_{n+1}^{(j)}, \cdots, i_{N}^{(j)}, i_{1}^{(j)}, \cdots, i_{n-1}^{(j)} \}$ for $j \in [m]$
		
		\Comment \texttt{idxs} can be retrieved from the sampling matrix $\bfS \in \bbR^{m \times \prod_{k \ne n}I_k}$ or the specific sampling with given probabilities
		\State Let $\tensor{G}^{\ne n}_S$ be a tensor of size $R_{n+1} \times m \times R_{n}$, where every lateral slice is an $R_{n+1} \times R_{n}$ identity matrix
		\For{$k = n+1, \cdots, N, 1, \cdots, n-1$}
		\State $\tensor{G}_{(k)S} \leftarrow \tensor{G}_{k}(:, \texttt{idxs}(:,k), :)$
		\State $\tensor{G}^{\ne n}_S \leftarrow \tensor{G}^{\ne n}_S \boxast_2 \tensor{G}_{(k)S}$
		
		\Comment we use $\boxast_2$ defined in \Cref{def:hada_product} below to simplify the representation
		\EndFor
		\State \Return  $\tensor{G}^{\ne n}_S$
		\EndFunction
	\end{algorithmic}
\end{algorithm}

\begin{algorithm}
	\caption{TR-ALS-Sampled \cite{malik2020SamplingBased}} 
	\label{alg:tr_als_sampled}
	\begin{algorithmic}[1]\footnotesize
	    \Function{$\{\tensor{G}_n\}_{n=1}^N$= TR-ALS-Sampled}{$\tensor{X}, R_1, \cdots, R_N, m$}
	    \Comment $m$ is the sampling size
	    \State Initialize cores $\tensor{G}_2, \cdots, \tensor{G}_N$
	    \State Compute probability distributions $\vect{p}_2, \cdots, \vect{p}_N$ based on leverage scores
	    \Repeat
	    \For{$n = 1, \cdots, N$}
	    \State Draw sampling matrix $\mat{S}$ using the probability distributions of $N-1$ cores
	    \State Retrieve \texttt{idxs} from $\mat{S}$
	    \State $\tensor{G}^{\ne n}_S$ = SST($\texttt{idxs}, \tensor{G}_{n+1}, \cdots, \tensor{G}_{N}, \tensor{G}_{1}, \cdots, \tensor{G}_{n-1}$)
		\State $\bfX_{S[n]}^\intercal \leftarrow \bfS \bfX_{[n]}^\intercal$
		\State Update $\tensor{G}_n = \mathop{\arg\min}_{\tensor{Z}} \| \hat{\bfG}_{S[2]}^{\ne n} \bfZ_{(2)}^\intercal - \hat{\bfX}_{S[n]}^\intercal \|_F$
		\State Update probability distributions $\vect{p}_n$ based on leverage scores
	    \EndFor
	    \Until{termination criteria met}
	    \EndFunction
	\end{algorithmic}
\end{algorithm}

Motivated by  TR-ALS-Sampled and the warning in \cite[Section 9]{martinsson2020RandomizedNumerical}, i.e., sampling can be a bit precarious, and inspired by the idea in \cite{battaglino2018PracticalRandomized}, we would like to combine TR-ALS with the sketching techniques to design the sketching-based algorithms for TR decomposition. To this end, we have to straighten the structure of the matrices $\mcS \bfG^{\ne n}_{[2]}$ and $\bfG^{\ne n}_{[2]}$, where $\mcS$ is some sketching matrix. This is because, like \cite{battaglino2018PracticalRandomized}, we also want to find the sketch $\mcS \bfG^{\ne n}_{[2]}$ without forming $\mcS$ and $\bfG^{\ne n}_{[2]}$.
In the following, we present the specific analysis.

Firstly, note that each row of $\mcS \bfG^{\ne n}_{[2]}$ is the linear combination of the rows of the matrix $\bfG^{\ne n}_{[2]}$, and the rows of the matrix $\bfG^{\ne n}_{[2]}$ are the vectorization of the lateral slices of the subchain tensor $\tensor{G}^{\ne n}$.
Therefore, from the perspective of the subchain tensor, the above process can be carried out from a linear combination of the lateral slices of $\tensor{G}^{\ne n}$. Moreover, if we use the properties of mode-2 product, the above operation can be written as $\tensor{G}^{\ne n} \times_2 \mcS$. We illustrate this transformation and process in \Cref{fig:structure}.

\begin{figure}[htbp]
	\centering  
	\includegraphics[width=0.9\columnwidth]{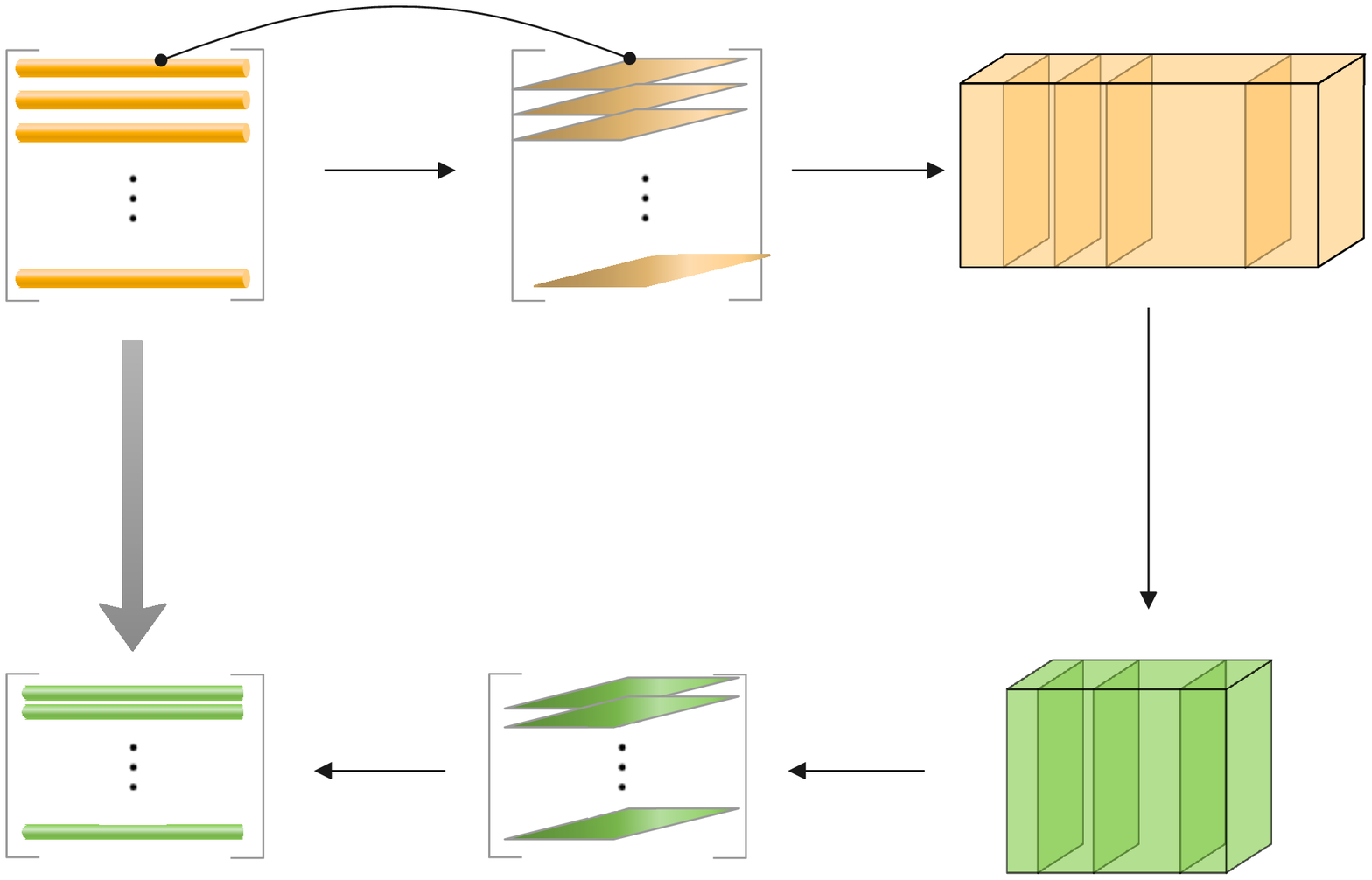}
	\caption{Illustration of the process for obtaining $\mcS \bfG^{\ne n}_{[2]}$ via $\tensor{G}^{\ne n} \times_2 \mcS$.}
	\label{fig:structure}
	\begin{picture}(0,0)
	\put(-210,195){\footnotesize $\bfG_{[2]}^{\ne n} \in \bbR^{\prod_{j \ne n} I_j \times R_nR_{n+1}}$}
	\put(-210,30){\footnotesize $\mcS \bfG_{[2]}^{\ne n} \in \bbR^{m \times R_nR_{n+1}}$}

	\put(-55,195){{\footnotesize $\prod_{j \ne n} I_j $ lateral slides}}
	\put(-55,30){{\footnotesize $m$ lateral slides}}
	
	\put(35,245){{\footnotesize Rotate}}
	
	\put(80,240){\footnotesize  $\tensor{G}^{\ne n} \in \bbR^{R_{n+1} \times \prod_{j \ne n} I_j \times R_n}$}
	\put(95,70){\footnotesize  $\tensor{G}^{\ne n} \times_2 \mcS$}
	\put(95,60){\footnotesize  $\in \bbR^{R_{n+1} \times m \times R_n}$}

	\end{picture}
\end{figure} 

Secondly, from \cref{eq:subchain_idx}, we observe that the subchain tensor $\tensor{G}^{\ne n}$ may be written as a Kronecker-like or Khatri-Rao-like product of TR-cores if we define a tensor product felicitously. The product is given as follows, and its graphical
illustration is provided in \Cref{fig:subchain_product}.
\begin{definition} [Subchain product]
	\label{def:subchain_product}
	Let $\tensor{A} \in \bbR^{I_1 \times J_1 \times K}$ and $\tensor{B} \in \bbR^{K \times J_2 \times I_2}$ be two 3-order tensors, and $\bfA(j_1)$ and $\bfB(j_2)$ be the $j_1$-th and $j_2$-th lateral slices of $\tensor{A}$ and $\tensor{B}$, respectively. The mode-2 \textbf{subchain product} of $\tensor{A}$ and $\tensor{B}$ is a tensor of size $I_1 \times J_1 J_2 \times I_2$ denoted by $\tensor{A} \boxtimes_2 \tensor{B}$ and  defined as 
	\begin{equation*}
	(\tensor{A} \boxtimes_2 \tensor{B})(\overline{j_1 j_2}) = \tensor{A}(j_1)\tensor{B}(j_2).
	\end{equation*}
	That is, with respect to the correspondence on indices, the lateral slices of $\tensor{A} \boxtimes_2 \tensor{B}$ are the classical matrix products of the lateral slices of $\tensor{A}$ and $\tensor{B}$. The mode-1 and mode-3 subchain products can be defined similarly.
\end{definition}

\begin{figure}[htbp] 
 \centering 
 \subfloat[Illustration of the subchain product given in \Cref{def:subchain_product}.]{
 \label{fig:subchain_product}
 \includegraphics[scale=0.5]{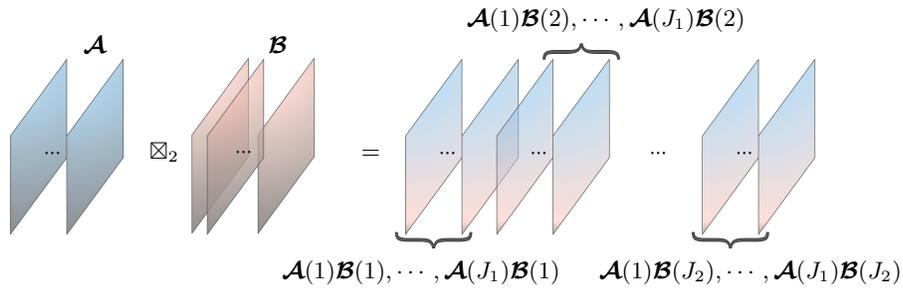}
 \begin{picture}(0,0)
 \put(-295,90){\footnotesize $\tensor{A}$}
 \put(-225,90){\footnotesize $\tensor{B}$}
 
 \put(-220,5){\footnotesize $\tensor{A}(1)\tensor{B}(1),\cdots,\tensor{A}(J_1)\tensor{B}(1)$}
 \put(-150,100){\footnotesize $\tensor{A}(1)\tensor{B}(2),\cdots,\tensor{A}(J_1)\tensor{B}(2)$}
 \put(-100,5){\footnotesize $\tensor{A}(1)\tensor{B}(J_2),\cdots,\tensor{A}(J_1)\tensor{B}(J_2)$}
 
 \put(-270,50){\footnotesize $\boxtimes_2$}
 \put(-190,50){\footnotesize $=$}
 \end{picture}
 } 
 \quad 
 \subfloat[Illustration of the slices-Hadamard product given in \Cref{def:hada_product}.]{
 \label{fig:hada_product}
 \includegraphics[scale=0.5]{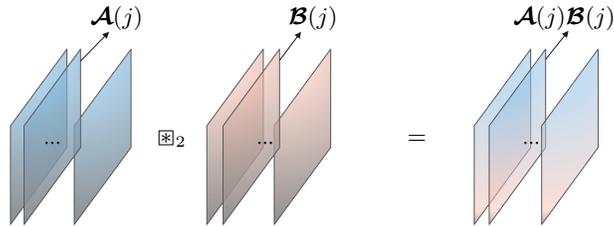}
 \begin{picture}(0,0)
 \put(-210,90){\footnotesize $\tensor{A}(j)$}
 \put(-135,90){\footnotesize $\tensor{B}(j)$}
 \put(-50,90){\footnotesize $\tensor{A}(j)\tensor{B}(j)$}

 \put(-185,45){\footnotesize $\boxast_2$}
 \put(-90,45){\footnotesize $=$}
 \end{picture}
 }
 \caption{Illustration of two new products of 3-order tensors.}
\end{figure}

Using this definition, we can rewrite the subchain tensor $\tensor{G}^{\ne n}$ in \cref{def:subchain} as follows 
\begin{equation}
\label{eq:subchain_new}
\tensor{G}^{\ne n} = \tensor{G}_{n+1} \boxtimes_2 \cdots \boxtimes_2 \tensor{G}_{N} \boxtimes_2 \tensor{G}_{1} \boxtimes_2 \cdots \boxtimes_2 \tensor{G}_{n-1}.
\end{equation}

In the following, we present a property of the above subchain product.
\begin{proposition}
	\label{prop:slice_kron}
	Let $\tensor{A} \in \bbR^{I_1 \times J_1 \times K}$ and $\tensor{B} \in \bbR^{K \times J_2 \times I_2}$ be two 3-order tensors, and $\bfA \in \bbR^{R_1 \times J_1}$ and $\bfB \in \bbR^{R_2 \times J_2}$ be two matrices. Then
	\begin{equation*}
	(\tensor{A} \times_2 \bfA) \boxtimes_2 (\tensor{B} \times_2 \bfB) = (\tensor{A} \boxtimes_2 \tensor{B}) \times_2 (\bfB \otimes \bfA).
	\end{equation*}
\end{proposition}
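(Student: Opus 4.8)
The plan is to verify the identity by comparing lateral slices. Both sides are $3$-order tensors in $\bbR^{I_1 \times R_1 R_2 \times I_2}$ — on the right, $\bfB \otimes \bfA \in \bbR^{R_2 R_1 \times J_2 J_1}$ has $R_2 R_1 = R_1 R_2$ rows and $J_2 J_1 = J_1 J_2$ columns, which matches the mode-$2$ size of $\tensor{A} \boxtimes_2 \tensor{B}$, so the mode-$2$ product is well defined — hence it suffices to show that for every pair $(r_1, r_2) \in [R_1] \times [R_2]$ the lateral slices indexed by the multi-index $\overline{r_1 r_2}$ coincide.

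First I would expand the left-hand side. By \Cref{def:moden_product}, the $r_1$-th lateral slice of $\tensor{A}\times_2\bfA$ is the linear combination $\sum_{j_1=1}^{J_1}\bfA(r_1,j_1)\,\tensor{A}(j_1)$, and likewise $(\tensor{B}\times_2\bfB)(r_2) = \sum_{j_2=1}^{J_2}\bfB(r_2,j_2)\,\tensor{B}(j_2)$. Applying \Cref{def:subchain_product} and then the bilinearity of matrix multiplication yields
\begin{equation*}
\bigl((\tensor{A}\times_2\bfA)\boxtimes_2(\tensor{B}\times_2\bfB)\bigr)(\overline{r_1 r_2}) = \sum_{j_1=1}^{J_1}\sum_{j_2=1}^{J_2}\bfA(r_1,j_1)\,\bfB(r_2,j_2)\,\tensor{A}(j_1)\tensor{B}(j_2).
\end{equation*}
Next I would expand the right-hand side: by \Cref{def:subchain_product} the $\overline{j_1 j_2}$-th lateral slice of $\tensor{A}\boxtimes_2\tensor{B}$ is the matrix product $\tensor{A}(j_1)\tensor{B}(j_2)$, and then \Cref{def:moden_product} gives the $\overline{r_1 r_2}$-th slice of $(\tensor{A}\boxtimes_2\tensor{B})\times_2(\bfB\otimes\bfA)$ as $\sum_{\overline{j_1 j_2}}(\bfB\otimes\bfA)(\overline{r_1 r_2},\overline{j_1 j_2})\,\tensor{A}(j_1)\tensor{B}(j_2)$, the sum running over all column indices of $\bfB\otimes\bfA$. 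Feeding in the element-wise form from \Cref{def:kronecker} with the little-endian convention — which for $\bfB\otimes\bfA$ reads $(\bfB\otimes\bfA)(\overline{r_1 r_2},\overline{j_1 j_2}) = \bfA(r_1,j_1)\,\bfB(r_2,j_2)$, with $(r_1,j_1)\in[R_1]\times[J_1]$ pairing with the factor $\bfA$ and $(r_2,j_2)\in[R_2]\times[J_2]$ pairing with $\bfB$ — turns the right-hand slice into exactly the double sum displayed above, which finishes the proof.

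The only place the argument can go wrong, and hence the step I would be most careful about, is the matching of multi-index orderings: one must check that the pairing $(r_1,r_2)\leftrightarrow\overline{r_1 r_2}$ that the subchain product puts on the second mode of the left-hand side (with $\tensor{A}\times_2\bfA$ appearing "first" and $\tensor{B}\times_2\bfB$ "second") is the same pairing that $\bfB\otimes\bfA$ — note the deliberately reversed order of the factors, forced by the index convention in \Cref{def:kronecker} — puts on its rows, and symmetrically that the column ordering of $\bfB\otimes\bfA$ agrees with the lateral-slice ordering $\overline{j_1 j_2}$ of $\tensor{A}\boxtimes_2\tensor{B}$. Once the conventions are lined up this way the two expressions are literally the same sum and nothing further is needed. (An alternative, fully element-wise, route — writing $\tensor{A}(i_1,j_1,k)$, $\tensor{B}(k,j_2,i_2)$ and contracting over $k$ — also works, but the slice-wise computation above keeps the matrix products intact and is cleaner.)
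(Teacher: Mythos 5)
Your proof is correct and follows essentially the same route as the paper's: a direct verification that expands the definitions of the mode-$2$ product, the subchain product, and the Kronecker product (with the little-endian multi-index convention) and matches the resulting double sum on both sides. The only difference is presentational --- you organize the computation at the level of lateral slices, keeping the matrix products $\tensor{A}(j_1)\tensor{B}(j_2)$ intact, whereas the paper writes the same identity entry-wise and contracts explicitly over the inner index $k$.
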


\begin{proof} 
    By \Cref{def:moden_product,def:subchain_product,def:kronecker}, upon some computations, we have
	\begin{subequations}
		\begin{align*}
		&((\tensor{A} \boxtimes_2 \tensor{B}) \times_2 (\bfB \otimes \bfA))(i_1,\overline{r_1r_2},i_2) \\
		&= \sum_{\overline{j_1j_2}=1}^{J_1J_2} (\tensor{A} \boxtimes_2 \tensor{B})(i_1,\overline{j_1j_2},i_2) (\bfB \otimes \bfA)(\overline{r_1r_2},\overline{j_1j_2}) &\text{by \Cref{def:moden_product}} \\
		&= \sum_{\overline{j_1j_2}=1}^{J_1J_2} \sum_{k=1}^{K} \tensor{A}(i_1,j_1,k)\tensor{B}(k,j_2,i_2) \bfB(r_2,j_2) \bfA(r_1,j_1) & \text{by \Cref{def:subchain_product,def:kronecker}} \\
		&= \sum_{k=1}^{K} \sum_{j_1=1}^{J_1} \tensor{A}(i_1,j_1,k) \bfA(r_1,j_1) \sum_{j_2=1}^{J_2}\tensor{B}(k,j_2,i_2)\bfB(r_2,j_2) \\ 
		&= \sum_{k=1}^{K} (\tensor{A} \times_2 \bfA)(i_1,r_1,k) (\tensor{B} \times_2 \bfB)(k,r_2,i_2) &\text{by \Cref{def:moden_product}} \\
		&= (\tensor{A} \times_2 \bfA) \boxtimes_2 (\tensor{B} \times_2 \bfB) (i_1,\overline{r_1r_2},i_2). & \text{by \Cref{def:subchain_product}}
		\end{align*}
	\end{subequations}
	Thus, the proof is completed.
\end{proof}

Based on the above findings, we can compute $\mcS \bfG^{\ne n}_{[2]}$ without forming $\mcS$ and $\bfG^{\ne n}_{[2]}$ explicitly by choosing a special $\mcS$. The details are presented in \Cref{sec:srft_tr}.

Furthermore, to facilitate the design of the algorithm by TensorSketch as in \cite{ma2021FastAccurate}, we define the following product, whose graphical
illustration is presented in  \Cref{fig:hada_product}.

\begin{definition} [Slices-Hadamard product]
	\label{def:hada_product}
	Let $\tensor{A} \in \bbR^{I_1 \times J \times K}$ and $\tensor{B} \in \bbR^{K \times J \times I_2}$ be two 3-order tensors, and $\bfA(j)$ and $\bfB(j)$ are the $j$-th lateral slices of $\tensor{A}$ and $\tensor{B}$, respectively. The mode-2 \textbf{slices-Hadamard product} of $\tensor{A}$ and $\tensor{B}$ is a tensor of size $I_1 \times J \times I_2$ denoted by $\tensor{A} \boxast_2 \tensor{B}$ and defined as
	\begin{equation*}
	(\tensor{A} \boxast_2 \tensor{B})(j) = \tensor{A}(j)\tensor{B}(j).
	\end{equation*}
	That is, the $j$-th lateral slice of $\tensor{A} \boxast_2 \tensor{B}$ is the classical matrix product of the $j$-th lateral slices of $\tensor{A}$ and $\tensor{B}$. The mode-1 and mode-3 slices-Hadamard product can be defined similarly.
\end{definition}

Using \Cref{def:kronecker}, similar to the proof of \Cref{prop:slice_kron}, we can prove the following result.
\begin{proposition}
	\label{prop:slice_hada}
	Let $\tensor{A} \in \bbR^{I_1 \times J_1 \times K}$ and $\tensor{B} \in \bbR^{K \times J_2 \times I_2}$ be two 3-order tensors, and $\bfA \in \bbR^{M \times J_1}$ and $\bfB \in \bbR^{M \times J_2}$ be two matrices. Then
	\begin{equation*}
	(\tensor{A} \times_2 \bfA) \boxast_2 (\tensor{B} \times_2 \bfB) = (\tensor{A} \boxtimes_2 \tensor{B}) \times_2 (\bfB^\intercal \odot \bfA^\intercal)^\intercal.
	\end{equation*}
\end{proposition}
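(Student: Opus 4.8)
The plan is to mimic the element-wise verification carried out for \Cref{prop:slice_kron}: I would fix an entry $(i_1, m, i_2)$ of each side, with $i_1 \in [I_1]$, $m \in [M]$, $i_2 \in [I_2]$, and reduce both sides to the same summation over $k \in [K]$. The only genuinely new ingredient, compared with \Cref{prop:slice_kron}, is understanding what the Khatri--Rao factor $(\bfB^\intercal \odot \bfA^\intercal)^\intercal$ does in place of the Kronecker factor $\bfB \otimes \bfA$.

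First I would rewrite $(\bfB^\intercal \odot \bfA^\intercal)^\intercal$ entrywise. Since $\bfB^\intercal \in \bbR^{J_2 \times M}$ and $\bfA^\intercal \in \bbR^{J_1 \times M}$ share the column count $M$, \Cref{def:kr} applies and gives $(\bfB^\intercal \odot \bfA^\intercal)(\overline{j_1 j_2}, m) = \bfB(m, j_2)\bfA(m, j_1)$, hence
\begin{equation*}
(\bfB^\intercal \odot \bfA^\intercal)^\intercal(m, \overline{j_1 j_2}) = \bfA(m, j_1)\,\bfB(m, j_2).
\end{equation*}
This is the analogue of the factorization $(\bfB \otimes \bfA)(\overline{r_1 r_2}, \overline{j_1 j_2}) = \bfA(r_1, j_1)\bfB(r_2, j_2)$ used in the proof of \Cref{prop:slice_kron}; transposing the Khatri--Rao product has exactly the effect of collapsing the two separate output indices $r_1, r_2$ of the Kronecker case into the single shared index $m$ (this is why here we need $\bfA$ and $\bfB$ to have the same number of rows $M$, whereas in \Cref{prop:slice_kron} they had $R_1$ and $R_2$ rows).

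Next I would expand the right-hand side by combining \Cref{def:moden_product}, \Cref{def:subchain_product}, and the identity just derived:
\begin{align*}
&\big((\tensor{A} \boxtimes_2 \tensor{B}) \times_2 (\bfB^\intercal \odot \bfA^\intercal)^\intercal\big)(i_1, m, i_2)\\
&\quad = \sum_{\overline{j_1 j_2}=1}^{J_1 J_2}\ \sum_{k=1}^K \tensor{A}(i_1, j_1, k)\,\tensor{B}(k, j_2, i_2)\,\bfA(m, j_1)\,\bfB(m, j_2)\\
&\quad = \sum_{k=1}^K (\tensor{A}\times_2\bfA)(i_1, m, k)\,(\tensor{B}\times_2\bfB)(k, m, i_2),
\end{align*}
where the last equality holds because $\overline{j_1 j_2}$ ranges over all pairs in $[J_1]\times[J_2]$, so the inner sum factors over $j_1$ and $j_2$, and each factor is a mode-2 product by \Cref{def:moden_product}. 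On the other hand, by \Cref{def:hada_product} the left-hand side evaluated at $(i_1, m, i_2)$ is the $(i_1, i_2)$ entry of the matrix product $(\tensor{A}\times_2\bfA)(m)\,(\tensor{B}\times_2\bfB)(m)$, which is precisely the same sum over $k$; equating the two expressions finishes the argument.

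The one place that requires care — and the only spot where the little-endian convention intervenes — is the ordering of indices inside $\overline{j_1 j_2}$ and the matching order of the Khatri--Rao factors: I would check that $\bfB^\intercal$ must sit on the left and $\bfA^\intercal$ on the right so that the row index of $\bfB^\intercal \odot \bfA^\intercal$ is the multi-index $\overline{j_1 j_2}$ that is compatible with the convention $(\tensor{A}\boxtimes_2\tensor{B})(\overline{j_1 j_2}) = \tensor{A}(j_1)\tensor{B}(j_2)$ of \Cref{def:subchain_product}. Once that bookkeeping is pinned down, the remainder is the same routine chain of index manipulations as in \Cref{prop:slice_kron} and presents no further obstacle.
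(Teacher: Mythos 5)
Your proof is correct and follows exactly the route the paper intends: the paper omits the details and simply says the result follows "similar to the proof of Proposition \ref{prop:slice_kron}," and your element-wise expansion, with the key observation that $(\bfB^\intercal \odot \bfA^\intercal)^\intercal(m,\overline{j_1 j_2}) = \bfA(m,j_1)\bfB(m,j_2)$ replaces the Kronecker factorization by collapsing $(r_1,r_2)$ into the single index $m$, is precisely that argument. Your index bookkeeping for the little-endian multi-index $\overline{j_1 j_2}$ and the order of the Khatri--Rao factors is also consistent with Definitions \ref{def:kr} and \ref{def:subchain_product}.
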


With the slices-Hadamard product, we have the following formula which can avoid forming the TensorSketch and implementing the matrix multiplication between large matrices.
\begin{proposition}
	\label{prop:ts_tr}
	Let $\bfS_n = \mat{\Omega}_n \bfD_n \in \bbR^{m \times I_n}$, where $\mat{\Omega}_n \in \bbR^{m \times I_n}$ and $\bfD_n \in \bbR^{I_n \times I_n}$ are defined based on $H_n$ and  $S_n$ in \Cref{def:tensersketch_tr}, respectively. Let $\bfT \in  \bbR^{m \times \prod_{j=1}^N I_N}$ be defined in \Cref{def:tensersketch_tr} and $\tensor{P} = \tensor{A}^{(1)}\boxtimes_2 \tensor{A}^{(2)}\boxtimes_2 \cdots \boxtimes_2 \tensor{A}^{(N)}$ with $\tensor{A}^{(n)} \in \bbR^{R_n \times I_n \times R_{n+1}}$ for $n \in [N]$. Then
	\begin{equation*}
	\label{eq:tensorsketch}
	\tensor{P} \times_2 \bfT = \FFT^{-1}\left(
	\boxast_{2~n=1}^{~~N}
	\FFT \left(\tensor{A}^{(n)} \times_2 \mat{S}_{n}, [~], 2\right)
	, [~], 2 \right).
	\end{equation*}
\end{proposition}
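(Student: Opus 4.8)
The plan is to reduce the claimed tensor identity to a short matrix identity by writing every one--dimensional FFT as a mode-$2$ multiplication, then using \Cref{prop:slice_hada} to do the combinatorial work, leaving only the classical CountSketch/TensorSketch convolution identity to verify by hand, adapted to the little-endian convention of \Cref{rem:tensorsketch}. Throughout, write $i=\overline{i_1\cdots i_N}$ and let $\bfF_m$ denote the $m\times m$ DFT matrix, with $\FFT$ taken in the standard unnormalized convention (so that composing $N$ forward transforms with one inverse transform carries the correct scaling).

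First I would record two elementary facts, both immediate from \Cref{def:moden_product}: (i) a length-$m$ FFT along mode $2$ of a $3$-order tensor is the mode-$2$ product with $\bfF_m$, and $\FFT^{-1}(\cdot,[~],2)=\cdot\times_2\bfF_m^{-1}$, since the FFT acts identically on each mode-$2$ fiber; and (ii) mode-$2$ products compose, $(\tensor{X}\times_2\bfU)\times_2\bfV=\tensor{X}\times_2(\bfV\bfU)$. Together they give $\FFT(\tensor{A}^{(n)}\times_2\bfS_n,[~],2)=\tensor{A}^{(n)}\times_2(\bfF_m\bfS_n)$ for each $n\in[N]$.

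Next I would iterate \Cref{prop:slice_hada} (which holds verbatim over $\bbC$, its proof being pure index algebra) by induction on $N$, using associativity of $\boxast_2$, $\boxtimes_2$ and $\odot$, to collapse the slices-Hadamard product into a single mode-$2$ product on $\tensor{P}=\tensor{A}^{(1)}\boxtimes_2\cdots\boxtimes_2\tensor{A}^{(N)}$:
\[
\boxast_{2~n=1}^{~~N}\bigl(\tensor{A}^{(n)}\times_2(\bfF_m\bfS_n)\bigr)=\tensor{P}\times_2\bfM^\intercal,\qquad \bfM:=(\bfF_m\bfS_N)^\intercal\odot\cdots\odot(\bfF_m\bfS_1)^\intercal\in\bbC^{\prod_j I_j\times m},
\]
where the reversed factor order in $\bfM$ is inherited from the form of \Cref{prop:slice_hada}. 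Applying $\FFT^{-1}(\cdot,[~],2)=\cdot\times_2\bfF_m^{-1}$ to both sides and composing the two mode-$2$ products, the right-hand side of the proposition becomes $\tensor{P}\times_2(\bfF_m^{-1}\bfM^\intercal)$, so it suffices to prove $\bfF_m^{-1}\bfM^\intercal=\bfT$.

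This last identity is the crux and I would check it entrywise. From $\bfS_n(t,i_n)=S_n(i_n)$ if $t=H_n(i_n)$ and $0$ otherwise, together with \Cref{def:kr}, one gets $\bfM(i,\ell)=\bigl(\prod_{n}S_n(i_n)\bigr)\prod_n\bfF_m(\ell,H_n(i_n))=S(i)\,\omega_m^{(\ell-1)\sum_n(H_n(i_n)-1)}$ with $\omega_m:=e^{-2\pi\mathrm i/m}$; the point requiring care is that the little-endian row ordering produced by the iterated $\odot$ matches the arguments of $H$ and $S$ in \Cref{def:tensersketch_tr}. Then $(\bfF_m^{-1}\bfM^\intercal)(\ell',i)=\tfrac{S(i)}{m}\sum_{\ell=1}^{m}\omega_m^{(\ell-1)\bigl(\sum_n(H_n(i_n)-1)-(\ell'-1)\bigr)}$, and orthogonality of the $m$-th roots of unity collapses the sum to $S(i)$ exactly when $\ell'-1\equiv\sum_n(H_n(i_n)-1)\pmod m$, i.e.\ $\ell'=H(i)$, and to $0$ otherwise --- which is precisely $\bfT(\ell',i)$ by \Cref{def:tensersketch_tr}. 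Substituting back yields $\tensor{P}\times_2\bfT$ and completes the proof. I expect the main obstacle to be exactly this hash/sign bookkeeping, since the cited TensorSketch references use the big-endian convention (\Cref{rem:tensorsketch}) and \Cref{prop:slice_hada} reverses the Khatri-Rao factor order, so the index correspondences must be re-derived rather than quoted; a fully self-contained alternative expands both sides directly via \Cref{def:moden_product,def:subchain_product,def:hada_product,def:tensersketch_tr} and invokes the same root-of-unity cancellation, with identical essential difficulty.
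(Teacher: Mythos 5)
Your proof is correct, but it takes a genuinely different route from the paper's. The paper encodes $\tensor{A}^{(n)}\times_2\bfS_n$ and $\tensor{P}\times_2\bfT$ as matrix polynomials of degree $m-1$ in a variable $\omega$ with $\omega^m=1$, shows from the definitions of $H$ and $S$ that the polynomial attached to $\tensor{P}$ factors as the product of the polynomials attached to the $\tensor{A}^{(n)}$, and then invokes the convolution theorem to turn ``multiply polynomials modulo $\omega^m-1$'' into ``FFT, slicewise multiply, inverse FFT''; the argument is written out only for $N=2$ with the general case declared immediate. You instead stay entirely in linear algebra: you convert each FFT into a mode-$2$ product with $\bfF_m$, iterate \Cref{prop:slice_hada} to collapse the slices-Hadamard product into a single mode-$2$ product $\tensor{P}\times_2\bfM^\intercal$ with $\bfM$ an iterated Khatri-Rao product, and then verify the matrix identity $\bfF_m^{-1}\bfM^\intercal=\bfT$ entrywise via orthogonality of the $m$-th roots of unity. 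Your version buys several things the paper's does not: it handles general $N$ by an explicit induction rather than by assertion, it is explicit about the unnormalized FFT convention and where the $1/m$ lands (the paper's appeal to ``the famous convolution theorem'' leaves the scaling implicit), it actually puts \Cref{prop:slice_hada} to work (the paper proves that proposition but does not invoke it in this proof), and it isolates the only genuinely delicate point --- the little-endian row ordering of the iterated Khatri-Rao product matching the arguments of $H$ and $S$ --- as a concrete index check. The paper's version is shorter and makes the conceptual reason (hash addition and sign multiplication turn the subchain product into polynomial multiplication, i.e., cyclic convolution) more visible. Both rest on the same underlying fact that the DFT diagonalizes cyclic convolution.
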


\begin{proof}
	For simplicity, we only present the proof for $N=2$ here. Extending the proof to the general case is immediate. 
	
	On the one hand, the mode-$2$ products of $\tensor{A}^{(1)}$ with $\bfS_1$ and of $\tensor{A}^{(2)}$ with $\bfS_2$ can be represented by the following two $m-1$ degree matrix polynomials
	\begin{align*}
	\mcP^{(1)}(\omega)& = \sum_{i_1 = 1}^{I_1} S_1(i_1) \tensor{A}^{(1)}(:,i_1,:) \omega^{H_1(i_1)-1} = \sum_{m'=1}^{m} c^{(1)}_{m'} \omega^{m'-1} \in \bbR^{R_1 \times R_2},\\
	\mcP^{(2)}(\omega)& = \sum_{i_2 = 1}^{I_2} S_2(i_2) \tensor{A}^{(2)}(:,i_2,:) \omega^{H_2(i_2)-1} = \sum_{m'=1}^{m} c^{(2)}_{m'} \omega^{m'-1} \in \bbR^{R_2 \times R_3},
	\end{align*}
	where $c^{(1)}_{1},\ldots,c^{(1)}_{m}\in \bbR^{R_1 \times R_2}$ and $c^{(2)}_{1},\ldots,c^{(2)}_{m}\in \bbR^{R_2 \times R_3}$ are coefficient matrices, and $\omega$ satisfies $\omega^m=1$ but $\omega^{m'}\neq 1$ for $m'=1, \ldots, m-1$.  
	
	On the other hand, $\tensor{P} \times_2 \mat{T}$ can be represented as the following matrix polynomial
	\begin{align}
	\label{eq:poly_prod}
	\mcP(\omega)
	&= \sum_{i = 1}^{I_1I_2} S(i) \tensor{P}(:,i,:) \omega^{H(i)-1} = \sum_{m'=1}^{m} c_{m'} \omega^{m'-1} \in \bbR^{R_1 \times R_3} ,
	\end{align}
	where $c_{1},\ldots,c_{m}\in \bbR^{R_1 \times R_3}$ are coefficient matrices, $i = \overline{i_1i_2}$ and $\omega$ satisfies the constraints mentioned above.
	
	Note that 
	$\tensor{P} = \tensor{A}^{(1)}\boxtimes_2 \tensor{A}^{(2)} \in  \bbR^{R_1 \times I_1I_2 \times R_{3}}$ with $\tensor{A}^{(n)} \in \bbR^{R_n \times I_n \times R_{n+1}}$ for $n = 1,2$. Then, according to \Cref{def:subchain_product}, we have
	\begin{equation}
	\label{eq:tensorP}
	\tensor{P}(:,i,:) = \tensor{A}^{(1)}(:,i_1,:) \tensor{A}^{(2)}(:,i_2,:),
	\end{equation}
	where $i \in [I_1I_2]$, $i_1 \in [I_1]$, and $i_2 \in [I_2]$. Substituting \cref{eq:tensorP} into \cref{eq:poly_prod} implies 
	\begin{align*}
	\mcP(\omega)&= \sum_{\overline{i_1i_2} = 1}^{I_1I_2} S(\overline{i_1i_2}) \tensor{A}^{(1)}(:,i_1,:) \tensor{A}^{(2)}(:,i_2,:) \omega^{H(\overline{i_1i_2})-1}, 
	\end{align*}
	which together with \Cref{def:tensersketch_tr} and the constraints on $\omega$ mentioned above  gives
	\begin{align}
	\label{eq:poly_prod1}
	\nonumber\mcP(\omega)&= \sum_{i_1 = 1}^{I_1} \sum_{i_2 = 1}^{I_2} S_1(i_1)S_2(i_2) \tensor{A}^{(1)}(:,i_1,:) \tensor{A}^{(2)}(:,i_2,:)  \omega^{H_1(i_1)+H_2(i_2)-2~mod~m} \\
	\nonumber &=  \sum_{i_1 = 1}^{I_1} S_1(i_1) \tensor{A}^{(1)}(:,i_1,:) \omega^{H_1(i_1)-1} \sum_{i_2 = 1}^{I_2} S_2(i_2) \tensor{A}^{(2)}(:,i_2,:) \omega^{H_2(i_2)-1} \\
	&= \mcP^{(1)}(\omega) \mcP^{(2)}(\omega).
	\end{align}
	Thus, viewing  $$c_1,\cdots, c_m; ~\ c_1^{(1)},\cdots, c_m^{(1)}; ~\ c_1^{(2)},\cdots, c_m^{(2)}$$ as the lateral slices of the tensors $\tensor{C}, \tensor{C}^{(1)}$, and $\tensor{C}^{(2)}$, respectively, according to the famous convolution theorem, we can see that \cref{eq:poly_prod1} can be represented as 
	\begin{equation*}
	\tensor{C}= \FFT^{-1}\left( \FFT(\tensor{C}^{(1)},[~],2) \boxast_{2} \FFT(\tensor{C}^{(2)},[~],2), [~], 2 \right).
	\end{equation*}
	More precisely, 
	we have 
	\begin{equation*}
	\tensor{P} \times_2 \bfT = \FFT^{-1}\left(
	\FFT \left(\tensor{A}^{(1)} \times_2 \mat{S}_{1}, [~], 2\right)
	\boxast_{2}
	\FFT \left(\tensor{A}^{(2)} \times_2 \mat{S}_{2}, [~], 2\right)
	, [~], 2 \right).
	\end{equation*}
	This is the desired result.
\end{proof}

\section{KSRFT-based randomized TR decomposition}
\label{sec:srft_tr}
In this section, we present the randomized algorithm of TR decomposition based on KSRFT and discuss its theoretical analysis and computational complexity.

\subsection{Algorithms}
According to the analyses in \Cref{sec:main} and inspired by \cite{battaglino2018PracticalRandomized}, we choose $\mcS \in \bbR^{m \times \prod_{j \ne n} I_j}$ as\footnote{From \cref{def:SRFT}, there should be a scaling factor, i.e., $\sqrt{\frac{\prod_{j \ne n} I_j}{m} }$, in the right side of \cref{eq:s}. However, it has no effect on the solution. So we omit it here. In addition, there should also be a subscript, e.g., $\ne n$, in the matrices $\mcS, \bfS, \bfF$, and $\bfD$. For simplicity, we also omit it here. } 
\begin{equation}
\label{eq:s}
\mcS = \bfS \bfF \bfD,
\end{equation}
where
$\bfF =  \bigotimes_{\substack{n-1,\cdots,1,\\ N,\cdots,n+1}} \bfF_j $ and $\bfD =  \bigotimes_{\substack{n-1,\cdots,1,\\ N,\cdots,n+1}} \bfD_j $ with $\bfF_j\in \bbR^{I_j \times  I_j}$ and $\bfD_j\in \bbR^{I_j \times  I_j}$ being defined in \cref{def:SRFT}, and the meaning of $\bfS$ will be clarified later. Thus, the reduced problem of \cref{eq:tr_als} is

\begin{equation}
\label{eq:tr_fdals2}
\mathop{\arg\min}_{\bfG_{n(2)}} \left\| \bfS \bfF \bfD  \bfG_{[2]}^{\ne n} \bfG_{n(2)}^\intercal-\bfS \bfF \bfD \bfX_{[n]}^\intercal \right\|_F.
\end{equation}

From the discussions in \Cref{sec:main}, we know that finding $\bfF \bfD \bfG_{[2]}^{\ne n}$ can be transformed into finding $\tensor{G}^{\ne n} \times_2 (\bfF \bfD)$. While, using \cref{eq:subchain_new} and \Cref{prop:slice_kron}, we have
\begin{align}
\label{eq:subchain_fd}
\hat{\tensor{G}}^{\neq n} &= \tensor{G}^{\neq n} \times_2 (\bfF \bfD)  = \tensor{G}^{\neq n} \times_2 \left( \left( \bigotimes_{\substack{n-1,\cdots,1,\\ N,\cdots,n+1}} \bfF_j \right) \left( \bigotimes_{\substack{n-1,\cdots,1,\\ N,\cdots,n+1}} \bfD_j \right) \right) \\	
&\nonumber  = (\tensor{G}_{n+1} \times_2 (\bfF_{n+1} \bfD_{n+1})) \boxtimes_2 \cdots \boxtimes_2 (\tensor{G}_{N} \times_2 (\bfF_{N} \bfD_{N})) \\
&\nonumber \quad \quad \boxtimes_2 (\tensor{G}_{1} \times_2 (\bfF_{1} \bfD_{1})) \boxtimes_2 \cdots \boxtimes_2 (\tensor{G}_{n-1} \times_2 (\bfF_{n-1} \bfD_{n-1})).
\end{align}
As a result, $$\bfF \bfD \bfG_{[2]}^{\ne n} =  \hat \bfG_{[2]}^{\ne n}.$$ 
Thus, by mixing the lateral slices of each core $\tensor{G}_{j}$ via the small matrices $\bfF_j$ and $\bfD_j$, we get $\bfF \bfD \bfG_{[2]}^{\ne n}$, which can avoid forming the large matrices $\bfF$,  $\bfD$, and $\bfG^{\ne n}_{[2]}$ explicitly. 

Note that the second term in \cref{eq:tr_fdals2} is equivalent to
\begin{equation*}
\bfS \hat{\bfX}_{[n]}^\intercal (\bfD_n \bfF_n^*)^\intercal,
\end{equation*}
where $\hat{\tensor{X}}=\tensor{X} \times_1 (\bfF_1 \bfD_1) \times_2 (\bfF_2 \bfD_2) \cdots \times_N (\bfF_N \bfD_N)$, i.e., $\hat \bfX_{[n]} = \bfF_n \bfD_n \bfX_{[n]} ((\bfF_{n-1}\bfD_{n-1}) \otimes \cdots \otimes (\bfF_{1}\bfD_{1}) \otimes (\bfF_{N}\bfD_{N}) \otimes \cdots \otimes (\bfF_{n+1}\bfD_{n+1}))^ \intercal$, and the superscript ``$*$'' denotes the conjugate transpose. As explained in \cite{battaglino2018PracticalRandomized}, using the within the loop body and hence can significantly improve the computational efficiency.

Thus, our reduced problem has the following form
\begin{equation}
\label{eq:tr_argminsfd}
\mathop{\arg\min}_{\bfG_{n(2)}} \| \left(\bfS \hat{\bfG}_{[2]}^{\ne n} \right) \bfG_{n(2)}^\intercal- \bfS \hat{\bfX}_{[n]}^\intercal \left(\bfD_n \bfF_n^*\right)^\intercal \|_F.
\end{equation}

To avoid sampling the rows of $\hat \bfG^{\ne n}_{[2]}$, i.e., the lateral slices of the large mixed subchain tensor $\hat{\tensor{G}}^{\ne n}$ directly, we use the sampling strategy in \Cref{alg:sst}. That is, we sample the lateral slices of the $N-1$ cores individually and then form the sampled mixed subchain tensor $\hat{\tensor{G}}^{\ne n} \times_2 \bfS$. This process can be  expressed as follows:
\begin{align}
\label{eq:subchain_sketch}
\hat{\tensor{G}}^{\ne n} \times_2 \bfS &= ( (\tensor{G}_{n+1} \times_2 (\bfF_{n+1} \bfD_{n+1})) \boxtimes_2 
\cdots \boxtimes_2 (\tensor{G}_{N} \times_2 (\bfF_{N} \bfD_{N})) \\
&\nonumber \quad\quad \boxtimes_2 (\tensor{G}_{1} \times_2 (\bfF_{1} \bfD_{1})) \boxtimes_2 \cdots \boxtimes_2 (\tensor{G}_{n-1} \times_2 (\bfF_{n-1} \bfD_{n-1})) ) \times_2 \bfS \\
&\nonumber = (\tensor{G}_{n+1} \times_2 (\bfS_{n+1} \bfF_{n+1} \bfD_{n+1})) \boxast_2 
\cdots \boxast_2 (\tensor{G}_{N} \times_2 (\bfS_{N} \bfF_{N} \bfD_{N})) \\ 
&\quad \quad \boxast_2 (\tensor{G}_{1} \times_2 (\bfS_{1} \bfF_{1} \bfD_{1})) \boxast_2 \cdots \boxast_2 (\tensor{G}_{n-1} \times_2 (\bfS_{n-1} \bfF_{n-1} \bfD_{n-1})),\nonumber
\end{align}
where $\bfS_j\in \bbR^{m \times  I_j}$ is the sampling matrix for the $j$-th core. According to \Cref{prop:slice_hada}, we can see that $\bfS = \left( \bigodot_{\substack{n-1,\cdots,1,\\ N,\cdots,n+1}} \bfS_j^\intercal \right)^\intercal$. Thus, using the property of Khatri-Rao product, we have
\begin{equation}
\label{eq:sketch_ksrft}
	\mcS = \left( \bigodot_{\substack{n-1,\cdots,1,\\ N,\cdots,n+1}} \bfS_j^\intercal \right)^\intercal \left( \bigotimes_{\substack{n-1,\cdots,1,\\ N,\cdots,n+1}} (\bfF_j \bfD_j)\right)= \left( \bigodot_{\substack{n-1,\cdots,1,\\ N,\cdots,n+1}} (\bfS_j\bfF_j\bfD_j)^\intercal \right)^\intercal,
\end{equation}
and hence we can also call it the Khatri-Rao SRFT. Note that the $\mcS$ used in \cite{battaglino2018PracticalRandomized} has the same structure as $\mcS$ in \cref{eq:sketch_ksrft} though the structure was not provided explicitly there. 

The above discussions imply  \Cref{alg:tr_srft_als}.

\begin{algorithm}
	\caption{TR-KSRFT-ALS (Proposal)}
	\label{alg:tr_srft_als}
	\begin{algorithmic}[1]\footnotesize
		\Function{$\{\tensor{G}_n\}_{n=1}^N$= TR-KSRFT-ALS}{$\tensor{X}, R_1, \cdots, R_N, m$} 
		
		\Comment $\tensor{G}_n \in \bbR^{R_n \times I_n \times R_{n+1}}, n \in [N]$; $\tensor{X} \in \bbR^{I_1 \times \cdots \times I_N}$ 
		
		\Comment $R_1, \cdots, R_N$ are the TR-ranks
		
		\Comment $m$ is the uniform sampling size 
		\State Initialize cores ${\tensor{G}}_2, \cdots, {\tensor{G}}_N$ \label{line:srft_init}
		\State Define random sign-flip operators $\bfD_j$ and FFT matrices $\bfF_j$, for $j \in [N]$
		\State Mix cores: $\hat{\tensor{G}}_{n} \leftarrow \tensor{G}_{n} \times_2 (\bfF_n \bfD_n)$, for $ n = {2,\cdots, N}$ \label{line:cores_mix}
		\State Mix tensor: $\hat{\tensor{X}} \leftarrow \tensor{X} \times_1 (\bfF_1 \bfD_1) \times_2 (\bfF_2 \bfD_2) \cdots \times_N (\bfF_N \bfD_N)$ \label{line:srft_mix}
		\Repeat
		\For{$n = 1, \cdots, N$}
		\State Define sampling operator $\bfS \in \bbR^{m \times \prod_{j \ne n}I_j}$ \label{line:srft_S}
		\State Retrieve \texttt{idxs} from $\bfS$ \label{line:srft_idx}
		\State $\hat{\tensor{G}}^{\ne n}_S$ = SST($\texttt{idxs}, \hat{\tensor{G}}_{n+1}, \cdots, \hat{\tensor{G}}_{N}, \hat{\tensor{G}}_{1}, \cdots, \hat{\tensor{G}}_{n-1}$) \label{line:srft_sst}
		\State $\hat{\bfX}_{S[n]}^\intercal \leftarrow \bfS \hat{\bfX}_{[n]}^\intercal \left(\bfD_n \bfF_n^*\right)^\intercal$ \label{line:srft_SX}
		\State Update $\tensor{G}_n = \mathop{\arg\min}_{\tensor{Z}} \| \hat{\bfG}_{S[2]}^{\ne n} \bfZ_{(2)}^\intercal - \hat{\bfX}_{S[n]}^\intercal \|_F$ subject to $\tensor{G}_n$ being real-valued \label{line:srft_ls}
		\State $\hat{\tensor{G}}_{n} \leftarrow \tensor{G}_{n} \times_2 (\bfF_n \bfD_n)$ \label{line:srft_remix}
		\EndFor
		\Until{termination criteria met}
		\State \Return  $\tensor{G}_1, \cdots, \tensor{G}_N$
		\EndFunction
	\end{algorithmic}
\end{algorithm}

Furthermore, note that, with the property of the Frobenius norm, the reduced problem \cref{eq:tr_argminsfd} can be rewritten as 
\begin{equation*}
\mathop{\arg\min}_{\bfG_{n(2)}} \| \left(\bfS \hat{\bfG}_{[2]}^{\ne n} \right) \bfG_{n(2)}^\intercal \left(\bfF_n \bfD_n \right)^\intercal - \bfS \hat{\bfX}_{[n]}^\intercal  \|_F.
\end{equation*}
Thus, setting $\hat \bfG_{n(2)} = \bfF_n \bfD_n \bfG_{n(2)}$, we have 
\begin{equation}\label{eq:tr_argminsfd2}
\mathop{\arg\min}_{\hat \bfG_{n(2)}} \| \left(\bfS \hat{\bfG}_{[2]}^{\ne n} \right) \hat \bfG_{n(2)}^\intercal- \left(\bfS \hat{\bfX}_{[n]}^\intercal \right) \|_F.
\end{equation}
Consequently, we can solve the above problem to get $\hat{\tensor{G}}_n$ first and then recover the original core $\tensor{G}_n$ from it. The specific algorithm is summarized in \Cref{alg:pre_tr_srft_als}.

\begin{algorithm}
	\caption{TR-KSRFT-ALS-Premix (Proposal)}
	\label{alg:pre_tr_srft_als}
	\begin{algorithmic}[1]\footnotesize
		\Function{$\{\tensor{G}_n\}_{n=1}^N$= TR-KSRFT-ALS-Premix}{$\tensor{X}, R_1, \cdots, R_N, m$} 
		
		\Comment $\tensor{G}_n \in \bbC^{R_n \times I_n \times R_{n+1}}, n\in[N]$; $\tensor{X} \in \bbC^{I_1 \times \cdots \times I_N}$ 
		
		\Comment $R_1, \cdots, R_N$ are the TR-ranks
		
		\Comment $m$ is the uniform sampling size

		\State Define random sign-flip operators $\bfD_j$ and FFT matrices $\bfF_j$, for $j \in [N]$
		\State Mix tensor: $\hat{\tensor{X}} \leftarrow \tensor{X} \times_1 (\bfF_1 \bfD_1) \times_2 (\bfF_2 \bfD_2) \cdots \times_N (\bfF_N \bfD_N)$ \label{line:p_srft_mix}
		\State Initialize cores $\hat{\tensor{G}}_2, \cdots, \hat{\tensor{G}}_N$ \label{line:p_srft_init}
		\Repeat
		\For{$n = 1, \cdots, N$}
		\State Define sampling operator $\bfS \in \bbR^{m \times \prod_{j \ne n}I_j}$ \label{line:p_srft_S}
		\State Retrieve \texttt{idxs} from $\bfS$ \label{line:p_srft_idx}
		\State $\hat{\tensor{G}}^{\ne n}_S$ = SST($\texttt{idxs}, \hat{\tensor{G}}_{n+1}, \cdots, \hat{\tensor{G}}_{N}, \hat{\tensor{G}}_{1}, \cdots, \hat{\tensor{G}}_{n-1}$) \label{line:p_srft_sst}
		\State $\hat{\bfX}_{S[n]}^\intercal \leftarrow \bfS \hat{\bfX}_{[n]}^\intercal$ \label{line:p_srft_SX}
		\State Update $\hat{\tensor{G}}_n = \mathop{\arg\min}_{\tensor{Z}} \| \hat{\bfG}_{S[2]}^{\ne n} \bfZ_{(2)}^\intercal - \hat{\bfX}_{S[n]}^\intercal \|_F$ \label{line:p_srft_ls}
		\EndFor
		\Until{termination criteria met}
		\For{$n = 1, \cdots, N$}
		\State Unmix cores: $\tensor{G}_{n} \leftarrow \hat{\tensor{G}}_{n} \times_2 (\bfD_n \bfF_n^*)$ \label{line:p_srft_unmix}
		\EndFor
		\State \Return  $\tensor{G}_1, \cdots, \tensor{G}_N$
		\EndFunction
	\end{algorithmic}
\end{algorithm}

\begin{remark}
	\label{rem:outlinerr}
	Algorithms \ref{alg:tr_srft_als} and \ref{alg:pre_tr_srft_als} are inspired by \cite{battaglino2018PracticalRandomized}, which designed some practical randomized algorithms for CP decomposition. One of our main contributions is that we present an elegant way to transform the problem \cref{eq:tr_als} to the problem \cref{eq:tr_argminsfd} with the new tensor product and its property. This allows us to design algorithms as elegantly as in \cite{battaglino2018PracticalRandomized}. Compared with the method in \cite{malik2020SamplingBased}, which is mainly inspired by the work in \cite{larsen2020PracticalLeverageBased}, our method may work better for some special data, such as for the data whose TR-cores include outliers\footnote{It is obviously the posterior assumption because we do not know the true TR-cores of a tensor in advance in practice. This is similar to what is discussed on the algorithms of CP  decomposition in \cite[pp. 890-891]{battaglino2018PracticalRandomized} and \cite[p. 33]{vandecappelle2021NumericalAlgorithms}, where the authors consider the problem with correlated factor matrix columns. \label{foot:outlier} } \cite{gong2020RobustGradientBased,gong2021markov}. This is mainly because, with the sampling method in \Cref{alg:sst}, TR-ALS-Sampled may sample the aforementioned outliers with high probability many times and hence may cause an unstable result. Although we also use \Cref{alg:sst}, the sampling in our method is uniform sampling and the tensor is transformed by the randomized Fourier transform before sampling. The above analysis is supported by the numerical results of the third experiment on synthetic data in \Cref{sec:numerical}.
\end{remark}

\begin{remark}
	\label{rem:equivalent}
	Upon close examination of the derivation of \Cref{alg:tr_srft_als}, we may let $\bfF_j \bfD_j$ be any suitable randomized matrices. For example, we can set them to be the CountSketch matrices for sparse input data. The corresponding method can be denoted by TR-CS-ALS. Moreover, we are surprised to find that if we set them to be the transpose of the factor matrices appearing in the randomized Tucker decomposition, we can recover the algorithm rTR-ALS proposed in \cite{yuan2019RandomizedTensor}. 
	These analyses present an affirmative result that reducing the dimension of each mode of the original tensor with a suitable sketching matrix, finding the TR decomposition of the reduced tensor, and then recovering the TR decomposition of the original tensor is feasible. Note that, in these cases, the step on sampling is not necessary and hence the final product is the subchain product as in \cref{eq:subchain_fd} but not the slices-Hadamard product as in \cref{eq:subchain_sketch}.
	The differences are illustrated in \Cref{fig:all}, from which we can also find that the sketching-based methods for TR decomposition can be mainly classified into two classes: one is based on the subchain product, and the other is based on the slices-Hadamard product. We also summarize all the algorithms mentioned above in \Cref{tab:compare_alg}.
	
	\begin{table}[htbp]
	\caption{Comparisons of randomized algorithms for TR decomposition}
	\label{tab:compare_alg}
		\resizebox{1\linewidth}{!}{
		\begin{tabular}{llll}
		\hline
		\textbf{Algorithms}                                                               & $\bfS_j$ \textbf{in \Cref{fig:all}}                                                                                                                  & $\mcS$ \textbf{in \Cref{fig:all}}                                                                                     & \textbf{Product}                                                           \\ \hline
		rTR-ALS (\cite{yuan2019RandomizedTensor})                                & \begin{tabular}[c]{@{}l@{}}$\bfQ_j^\intercal$ from the QR decomposition of $\bfX_{[j]} \bfM$ \\ with $\bfM$ being a Gaussian matrix\end{tabular} & $ \bigotimes_{\substack{n-1,\cdots,1,\\ N,\cdots,n+1}} \bfQ_j^\intercal$                                   & Subchain product                                                   \\ \hline
		TR-ALS-Sampled (\cite{malik2020SamplingBased})                           & sampling matrix via leverage score                                                                                            & $\left( \bigodot_{\substack{n-1,\cdots,1,\\ N,\cdots,n+1}} (\bfS_j)^\intercal \right)^\intercal$             & \begin{tabular}[c]{@{}l@{}}Slices-Hadamard \\ product\end{tabular} \\ \hline
		\begin{tabular}[c]{@{}l@{}}TR-KSRFT-ALS \\ (Proposal)\end{tabular}       & $\bfS_j \bfF_j \bfD_j$                                                                                                      & $\left( \bigodot_{\substack{n-1,\cdots,1,\\ N,\cdots,n+1}} (\bfS_j\bfF_j\bfD_j)^\intercal \right)^\intercal$ & \begin{tabular}[c]{@{}l@{}}Slices-Hadamard \\ product\end{tabular} \\ \hline
		\begin{tabular}[c]{@{}l@{}}TR-TS-ALS \\ (Proposal)\end{tabular}          &  CountSketch                                                                                                            & \begin{tabular}[c]{@{}l@{}} TensorSketch \end{tabular}                  & \begin{tabular}[c]{@{}l@{}}Slices-Hadamard \\ product\end{tabular} \\ \hline
		\begin{tabular}[c]{@{}l@{}}TR-KSRFT-ALS \\ without sampling\end{tabular} & $\bfF_j \bfD_j$                                                                                                           & $ \bigotimes_{\substack{n-1,\cdots,1,\\ N,\cdots,n+1}} \bfF_j  \bfD_j$                                     & Subchain product                                                   \\ \hline
		\begin{tabular}[c]{@{}l@{}} TR-CS-ALS\end{tabular}     &  CountSketch                                                                                                          & $ \bigotimes_{\substack{n-1,\cdots,1,\\ N,\cdots,n+1}} \bfS_j$                                             & Subchain product                                                   \\ \hline
		\end{tabular}
		}
	\end{table}
	
	\begin{figure}[htbp]
		\centering  
		\includegraphics[width=1\columnwidth]{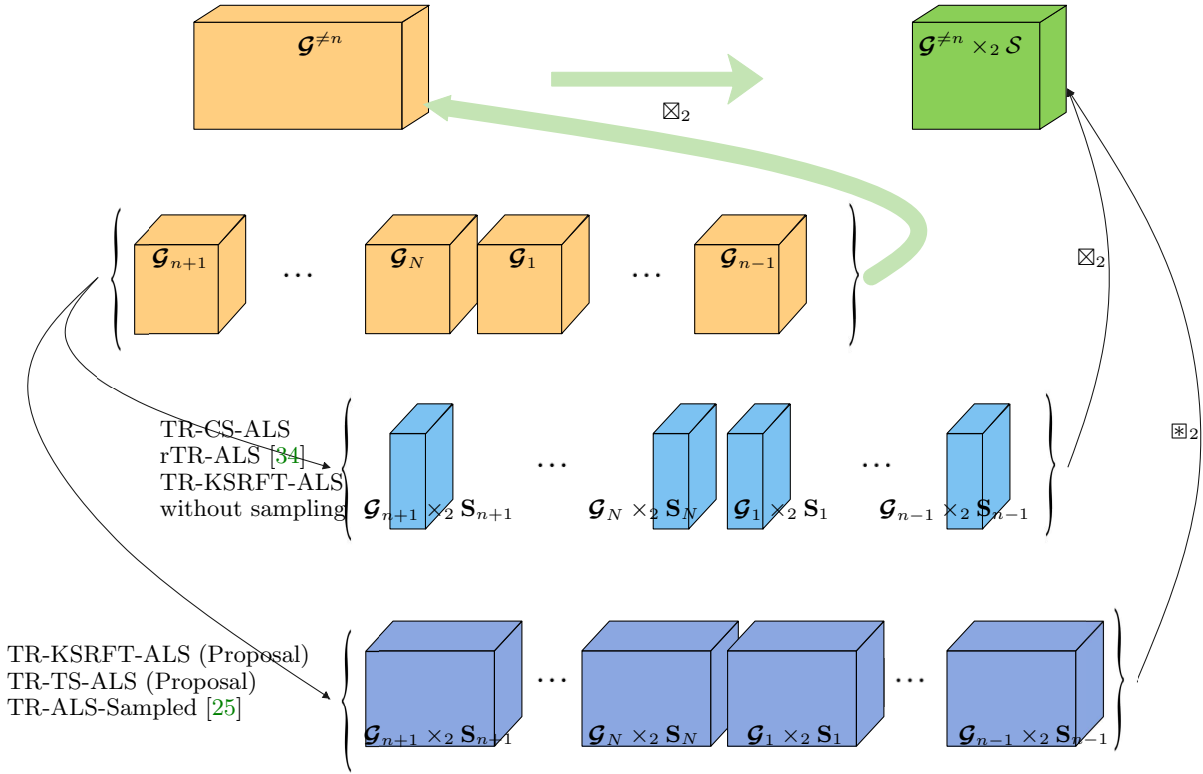}
		\caption{Illustration of how to efficiently construct $\tensor{G}^{\ne n} \times_2 \mcS$ by sketching the core tensors.}
		\label{fig:all}
		\begin{picture}(0,0)
		\put(-120,320){\footnotesize $\tensor{G}^{\ne n}$}
		\put(115,320){\footnotesize  $\tensor{G}^{\ne n} \times_2 \mcS$}
		
		\put(18,295){{\footnotesize $\boxtimes_2$}}
		
		\put(-175,240){{\footnotesize $\tensor{G}_{n+1}$}}
		\put(-85,240){{\footnotesize $\tensor{G}_{N}$}}
		\put(-40,240){{\footnotesize $\tensor{G}_{1}$}}
		\put(40,240){{\footnotesize $\tensor{G}_{n-1}$}}
		
		\put(-172,175){{\footnotesize TR-CS-ALS}}
		\put(-172,165){{\footnotesize rTR-ALS \cite{yuan2019RandomizedTensor}}}
		\put(-172,155){{\footnotesize TR-KSRFT-ALS}}
		\put(-172,145){{\footnotesize without sampling}}
		
		\put(-230,90){{\footnotesize TR-KSRFT-ALS (Proposal)}}
		\put(-230,80){{\footnotesize TR-TS-ALS (Proposal)}}
		\put(-230,70){{\footnotesize TR-ALS-Sampled \cite{malik2020SamplingBased}}}
		
		\put(-95,145){{\footnotesize $\tensor{G}_{n+1} \times_2 \bfS_{n+1}$}}
		\put(-10,145){{\footnotesize $\tensor{G}_{N} \times_2 \bfS_{N}$}}
		\put(45,145){{\footnotesize $\tensor{G}_{1} \times_2 \bfS_{1}$}}
		\put(100,145){{\footnotesize $\tensor{G}_{n-1} \times_2 \bfS_{n-1}$}}

		\put(-95,60){{\footnotesize $\tensor{G}_{n+1} \times_2 \bfS_{n+1}$}}
		\put(-10,60){{\footnotesize $\tensor{G}_{N} \times_2 \bfS_{N}$}}
		\put(50,60){{\footnotesize $\tensor{G}_{1} \times_2 \bfS_{1}$}}
		\put(130,60){{\footnotesize $\tensor{G}_{n-1} \times_2 \bfS_{n-1}$}}

		\put(175,240){{\footnotesize $\boxtimes_2$}}
		\put(210,175){{\footnotesize $\boxast_2$}}
		
		\end{picture}
	\end{figure} 
\end{remark}

\begin{remark}
	\label{rem:complex}
	As done in \cite{battaglino2018PracticalRandomized}, to get the real-valued TR-cores from the real-valued input tensor, we should solve  the following equivalent form of \LineRef{line:srft_ls} in \Cref{alg:tr_srft_als}:
	\begin{equation*}
	\tensor{G}_n = \mathop{\arg\min}_{\tensor{Z}} \left\|\begin{bmatrix} \mfR(\hat{\bfG}_{S[2]}^{\ne n}) \\
	\mfI(\hat{\bfG}_{S[2]}^{\ne n}) \end{bmatrix} \bfZ_{(2)}^\intercal - \begin{bmatrix} \mfR(\hat{\bfX}_{S[n]}^\intercal) \\ \mfI(\hat{\bfX}_{S[n]}^\intercal) \end{bmatrix} \right\| _F. 
	\end{equation*}
	Of course, if the matrices $\bfF_j$ used in the algorithm are real-valued orthogonal, this process can be ignored.
	For \Cref{alg:pre_tr_srft_als}, it is very applicable to finding the complex-valued core tensors from the complex-valued input tensor. 
\end{remark}

\subsection{Theoretical analysis}
Since the problem we focus on is essentially a sketched least squares problem like the one in \cite{battaglino2018PracticalRandomized,jin2021FasterJohnson}, we can apply Proposition 2.1 in \cite{jin2021FasterJohnson} to our method. The results are given as follows.

\begin{theorem}[A slight restatement of Proposition 2.1 in \cite{jin2021FasterJohnson}]
\label{thm:tr_als_ksrft}
	For the matrices $\bfG_{[2]}^{\ne n}$ and $\bfX_{[n]}^\intercal$ in \cref{eq:tr_als}, denote $rank(\bfG_{[2]}^{\ne n}) = r$ and fix $\varepsilon, \eta \in (0,1)$ such that $\prod_{j \ne n} I_j \lesssim 1/\varepsilon^r$ with $r \geq 2$. Then a sketching matrix $\mcS \in \bbC^{m \times \prod_{j \ne n} I_j}$ used in Algorithms \ref{alg:tr_srft_als} or \ref{alg:pre_tr_srft_als}, i.e., \cref{eq:sketch_ksrft} with
	\begin{equation*}
	m = \bigO{\varepsilon^{-1}r^{2(N-1)} \log^{2N-3}(\frac{r}{\varepsilon}) \log^4(\frac{r}{\varepsilon} \log(\frac{r}{\varepsilon})) \log \prod_{j \ne n} I_j}
	\end{equation*}
	is sufficient to output
	\begin{equation*}
	\tilde{\bfG}_{n(2)}^\intercal = \mathop{\arg\min}_{\bfG_{n(2)}^\intercal \in \bbR^{R_n R_{n+1} \times I_n}} \|\mcS \bfG_{[2]}^{\ne n} \bfG_{n(2)}^\intercal-\mcS \bfX_{[n]}^\intercal \|_F,
	\end{equation*}
	such that
	{\small\begin{align*}
	\mathbf{Pr}  \left( \| \bfG_{[2]}^{\ne n} \tilde{\bfG}_{n(2)}^\intercal - \bfX_{[n]}^\intercal \|_F = (1 \pm \bigO{\varepsilon}) \mathop{\min} \| \bfG_{[2]}^{\ne n} \bfG_{n(2)}^\intercal- \bfX_{[n]}^\intercal \|_F \right) 
	&\geq 1-\eta-2^{-\Omega(\log \prod_{j \ne n} I_j)}.
	\end{align*}}
\end{theorem}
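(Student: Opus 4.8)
The plan is essentially a translation: \Cref{thm:tr_als_ksrft} will follow from Proposition 2.1 of \cite{jin2021FasterJohnson} once the TR-ALS subproblem \cref{eq:tr_als} is recast as a generic sketched least squares problem and the structured sketch $\mcS$ of \cref{eq:sketch_ksrft} is identified with the KFJLT analyzed there. Fix $n$ and put $\bfA = \bfG_{[2]}^{\ne n}\in\bbR^{\prod_{j\ne n}I_j\times R_nR_{n+1}}$ with $\rank(\bfA)=r$ and $\bfB = \bfX_{[n]}^\intercal$; then \cref{eq:tr_als} is $\min_\bfY\|\bfA\bfY-\bfB\|_F$, and by the manipulations in \cref{eq:tr_argminsfd,eq:tr_argminsfd2} (the mode-$n$ factor $\bfD_n\bfF_n^*$ being orthogonal, it does not affect the optimal $\tensor{G}_n$) Algorithms \ref{alg:tr_srft_als} and \ref{alg:pre_tr_srft_als} return the minimizer $\tilde{\bfG}_{n(2)}^\intercal$ of the \emph{sketched} problem $\min_\bfY\|\mcS\bfA\bfY-\mcS\bfB\|_F$. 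The conceptual mechanism behind the guarantee is the Sarl\'os-type reduction: if $\mcS$ is a $(1/2)$-subspace embedding for $\mathrm{range}(\bfA)$ and satisfies the approximate-matrix-multiplication bound $\|\bfU^\intercal\mcS^\intercal\mcS\bfr^\star\|_F\le\tfrac{\varepsilon}{2}\|\bfr^\star\|_F$, where $\bfU$ is an orthonormal basis of $\mathrm{range}(\bfA)$ and $\bfr^\star$ the optimal residual, then $\|\bfA\tilde{\bfG}_{n(2)}^\intercal-\bfB\|_F\le(1+\bigO{\varepsilon})\min\|\bfA\bfG_{n(2)}^\intercal-\bfB\|_F$; the matching lower factor $(1-\bigO{\varepsilon})$ is trivial since the unsketched minimum is a global lower bound, which yields the two-sided $(1\pm\bigO{\varepsilon})$ estimate.

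The substantive input is that the structured matrix $\mcS$ of \cref{eq:sketch_ksrft}, with $m$ as in the statement, enjoys both the required embedding and the approximate-matrix-multiplication property with distortion $\bigO{\varepsilon}$ and failure probability at most $\eta + 2^{-\Omega(\log\prod_{j\ne n}I_j)}$, under the regime hypothesis $\prod_{j\ne n}I_j\lesssim\varepsilon^{-r}$, $r\ge 2$; this is exactly Proposition 2.1 of \cite{jin2021FasterJohnson}. To invoke it I must check that our $\mcS$ is the object covered there. By \cref{prop:slice_kron,prop:slice_hada}, the mode-wise mix--then--sample construction in \cref{eq:subchain_fd,eq:subchain_sketch} realizes precisely the Khatri--Rao SRFT acting on the ambient space $\bbR^{\prod_{j\ne n}I_j}$ that carries $\mathrm{range}(\bfA)$ (the rows of $\bfA$ being the vectorized lateral slices of the subchain tensor). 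Moreover, drawing a uniform index in $[\prod_{j\ne n}I_j]$ is the same as drawing each coordinate index independently and uniformly, so this Khatri--Rao SRFT has the same distribution as the KFJLT of \cref{def:SRFT}, up to the scalar $\sqrt{\prod_{j\ne n}I_j/m}$ dropped in \cref{eq:s}, which cancels in the least squares solution; hence \cite[Prop.~2.1]{jin2021FasterJohnson} applies.

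I expect the main difficulty to reside entirely in that cited embedding estimate, which is why I would not reprove it: the Kronecker structure of $\mcS$ removes the entrywise independence that makes the ordinary SRFT analysis routine, and obtaining $\bigO{\varepsilon}$ distortion with high probability is what forces both the regime restriction $\prod_{j\ne n}I_j\lesssim1/\varepsilon^r$ and the $\mathrm{polylog}$-in-$N$ inflation of $m$ through the delicate higher-moment bounds of \cite{jin2021FasterJohnson}. Everything else is bookkeeping: matching their hypotheses ($r\ge 2$ and the regime bound), noting that the orthogonal mode-$n$ mixing in \LineRef{line:srft_SX}--\LineRef{line:srft_ls} leaves the argmin $\tensor{G}_n$ unchanged so the guarantee transfers to the algorithm's actual output, and observing that the real-valued constraint on $\bfG_{n(2)}^\intercal$ against a complex $\mcS$ is handled by the standard real/imaginary stacking, under which the embedding property of $\mcS$ on $\mathrm{range}(\bfA)$ is preserved. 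Combining these with the Sarl\'os reduction completes the argument.
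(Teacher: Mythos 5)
Your proposal is correct and follows essentially the same route as the paper, which proves \Cref{thm:tr_als_ksrft} simply by observing that \cref{eq:tr_als} is a sketched least squares problem and directly invoking Proposition 2.1 of \cite{jin2021FasterJohnson}; your additional bookkeeping (the Sarl\'os reduction, the equivalence of the Khatri--Rao SRFT in \cref{eq:sketch_ksrft} with the KFJLT of \cref{def:SRFT} under uniform sampling, and the irrelevance of the orthogonal mode-$n$ mixing to the argmin) is consistent with the paper's implicit reasoning. No discrepancy to report.
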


\begin{remark}
\label{rem:ksrft_nocover}
    It is worth clarifying that the algorithms  rTR-ALS,TR-KSRFT-ALS without sampling and TR-CS-ALS listed in \Cref{tab:compare_alg} aren't covered by the guarantees in the above theorem. This is because there is no step on sampling in these methods.  
\end{remark}

\subsection{Computational complexity}
\label{sec:srft_cost}
We first analyze the computational complexity of \Cref{alg:tr_als} because our algorithms are based on TR-ALS. For simplicity, we assume that $I_n = I$ and $R_n = R$ for all $n \in [N]$ and that $N < I, R^2< I$ and $NR < I$. And we ignore any cost associated with e.g. normalization and checking termination conditions.

Upfront costs of TR-ALS:
\begin{itemize}
	\item \textbf{\LineRef{line:als_init}: Initializing cores.} This depends on how to initiate the cores. We assume that they are randomly drawn, e.g. from a Gaussian distribution, resulting in a cost $\bigO{NIR^2}$.
\end{itemize}

The costs of per outer loop iteration in TR-ALS:
\begin{itemize}
	\item \textbf{\LineRef{line:als_subchain}: Computing the unfolding subchain tensor.} If the $N-1$ cores are dense and contracted in sequence, the cost is $$R^3(I^2+I^3+ \cdots +I^{N-1}) \leq R^3(NI^{N-2}+I^{N-1}) \leq 2R^3I^{N-1} = \bigO{I^{N-1}R^3}.$$
	Doing this for each of the $N$ cores in the inner loop brings the cost to $\bigO{NI^{N-1}R^3}$.
	\item \textbf{\LineRef{line:als_ls}: Solving the least squares problem.} We use the standard QR-based approach to analyze the computational complexity. Doing a QR decomposition of $\bfG^{\ne n}_{[2]} \in \bbR^{I^{N-1} \times R^2}$ costs $\bigO{I^{N-1}R^4}$, and updating the right hand side and doing back substitution costs $\bigO{I(I^{N-1}R^2+R^4)} = \bigO{I^N R^2}$. The leading order complexity for solving the least squares problem is therefore $\bigO{I^N R^2}$. Doing this for each of the N cores in the inner loop brings the cost to $\bigO{N I^N R^2}$.
\end{itemize}

Putting them all together, we have that the leading order complexity of TR-ALS is 
$$\bigO{NIR^2 + it \cdot N I^N R^2},$$ 
where ``$it$'' denotes the number of outer loop iterations.

Now, we present the computational complexity of \Cref{alg:tr_srft_als}.

Upfront costs of TR-KSRFT-ALS:
\begin{itemize}
	\item \textbf{\LineRef{line:srft_init}: Initializing cores.} It is the same as TR-ALS, which costs $\bigO{NIR^2}$.
	\item \textbf{\LineRef{line:cores_mix}:  Mixing cores.} It costs $\bigO{(N-1)I \log I}$ because there are $N-1$ cores.
	\item \textbf{\LineRef{line:srft_mix}: Mixing tensor.} 
	It requires a significant upfront cost, which is $\bigO{I^N \log(I^N)}$.
\end{itemize}

The costs of per outer loop iteration in TR-KSRFT-ALS:
\begin{itemize}
	\item \textbf{\LineRef{line:srft_S}: Generating $N$ sampling matrices.} 
	It costs $\bigO{mN}$. Actually, these matrices are not generated explicitly in specific implementation.
	\item \textbf{\LineRef{line:srft_idx}-\LineRef{line:srft_sst}: Computing the sampled subchain tensor.} The main cost is the matrix multiplication of $N-1$ matrices of size $R \times R$ for each of the $m$ sampled mode-2 slices, which costs $\bigO{mR^3 N}$ per inner loop iteration, i.e., $\bigO{mR^3 N^2}$ per outer loop iteration.
	\item  \textbf{\LineRef{line:srft_SX}: Sampling the input tensor.} It needs to copy $\bigO{mI}$ elements from the input tensor and then do matrix multiplication. So the cost is $\bigO{mI^2}$ per inner loop iteration, i.e., 
	$\bigO{NmI^2}$ per outer loop iteration.  
	\item \textbf{\LineRef{line:srft_ls}: Solving the least squares problem.} Similar to TR-ALS, it costs $\bigO{ImR^2}$ per inner loop iteration, i.e., $\bigO{NImR^2}$ per outer loop iteration.
	\item \textbf{\LineRef{line:srft_remix}: Update the mixed cores.} It costs $\bigO{I \log I}$ per inner loop iteration, i.e., $\bigO{NI \log I}$ per outer loop iteration.
\end{itemize}

It follows that the overall leading order complexity of TR-KSRFT-ALS is 
$$\bigO{I^N \log(I^N) + it \cdot NmI^2},$$ 
where ``$it$'' denotes the number of outer loop iterations.

For the computational complexity of TR-KSRFT-ALS-Premix, the analysis is similar. Next we only show the difference:
\begin{itemize}
	\item \textbf{\LineRef{line:p_srft_unmix}: Unmixing the cores.} It costs $\bigO{NI \log I}$ because there are $N$ cores.
\end{itemize}

So the overall leading order complexity of TR-KSRFT-ALS-Premix is $$\bigO{I^N \log(I^N) + it \cdot NImR^2},$$ 
where ``$it$'' denotes the number of outer loop iterations.
\begin{remark}
	Like Algorithms 4 and 5 in \cite{battaglino2018PracticalRandomized}, the dominant cost of TR-KSRFT-ALS and TR-KSRFT-ALS-Premix also appears in the upfront costs, i.e., the mixing tensor step. Without considering this cost, i.e., we assume that the data has been preprocessed, the complexities of TR-KSRFT-ALS and TR-KSRFT-ALS-Premix can be reduced remarkably when $I$ and $N$ are huge.
\end{remark}

\section{TS-based randomized TR decomposition}
\label{sec:ts_tr}
In this section, we set $\mcS$ to be the TensorSketch $\bfT_{\ne n} \in \bbR^{m \times \prod_{j \ne n}I_j}$ defined as in \cref{def:tensersketch_tr} with $j=1, \ldots, n-1, n+1, \ldots, N$.
Thus, from the discussions in \Cref{sec:main,sec:srft_tr}, especially \cref{eq:subchain_new} and  \Cref{prop:ts_tr}, we can propose the following \Cref{alg:tr_ts_als} for TR decomposition based on the problem \cref{eq:tr_als}, and the \LineRef{line:ts_TG} is illustrated in \Cref{fig:all} and \Cref{tab:compare_alg}.

\begin{algorithm}
	\caption{TR-TS-ALS (Proposal)}
	\label{alg:tr_ts_als}
	\begin{algorithmic}[1]\footnotesize
		\Function{$\{\tensor{G}_n\}_{n=1}^N$= TR-TS-ALS}{$\tensor{X}, R_1, \cdots, R_N, m$} 
		
		\Comment $\tensor{G}_n \in \bbR^{R_n \times I_n \times R_{n+1}}, n \in [N]$; $\tensor{X} \in \bbR^{I_1 \times \cdots \times I_N}$ 
		
		\Comment $R_1, \cdots, R_N$ are the TR-ranks
		
		\Comment $m$ is the embedding size
		
		\State Define $\bfS_j$, i.e., the CountSketch, based on $H_n$ and  $S_n$ in \Cref{def:tensersketch_tr}, for $j \in [N]$ \label{line:ts_S}
		\For{$n = 1, \cdots, N$}
		\State Compute the sketch of $\bfX_{[n]}^\intercal$: $\hat{\bfX}_{[n]}^\intercal \leftarrow \bfT_{\ne n} \bfX_{[n]}^\intercal$ \label{line:ts_TX}
		\EndFor
		\State Initialize cores $\tensor{G}_2, \cdots, \tensor{G}_N$ \label{line:ts_init}
		\Repeat
		\For{$n = 1, \cdots, N$}
		\State Compute $\hat{\tensor{G}}^{\neq n} 
		= \FFT^{-1}\left( \boxast_{2~j=n+1,\cdots, N}^{~~1,\cdots, n-1} \FFT \left(\tensor{G}_{j} \times_2 \mat{S}_{j}, [~], 2\right), [~], 2 \right)$ \label{line:ts_TG}
		\State Update $\tensor{G}_n = \mathop{\arg\min}_{\tensor{Z}} \| \hat{\bfG}_{[2]}^{\ne n} \bfZ_{(2)}^\intercal - \hat{\bfX}_{[n]}^\intercal \|_F$ \label{line:ts_ls}
		\EndFor
		\Until{termination criteria met}
		\State \Return  $\tensor{G}_1, \cdots, \tensor{G}_N$
		\EndFunction
	\end{algorithmic}
\end{algorithm}

\begin{remark}
	\Cref{alg:tr_ts_als} is inspired by the work in \cite{ma2021FastAccurate}, which proposes a fast and accurate sketched ALS algorithm for Tucker decomposition by using TensorSketch. Our main contribution is that we find the graceful formula in \cref{eq:tensorsketch} using the new tensor product and its property. With it, we can design the algorithm as efficiently as in \cite{ma2021FastAccurate}.
\end{remark}

Similar to Theorem A.7 in \cite{ma2021FastAccurate}, we have the following theoretical sketch size for achieving $\bigO{\varepsilon}$-relative error.
\begin{theorem}[A slight restatement of Theorem A.7 in \cite{ma2021FastAccurate}]
\label{thm:tr_als_ts}
	For the matrices $\bfG_{[2]}^{\ne n}$ and $\bfX_{[n]}^\intercal $ in \cref{eq:tr_als}, fix $\varepsilon, \eta \in (0,1)$. Then a TensorSketch $\bfT_{\ne n}$ used in \Cref{alg:tr_ts_als} with
	\begin{equation*}
	m = \bigO{((R_{n}R_{n+1} \cdot 3^{N-1})((R_{n}R_{n+1} + 1/ \varepsilon^2 )/ \eta},
	\end{equation*}
	is sufficient to output
	\begin{equation*}
	\tilde{\bfG}_{n(2)}^\intercal = \mathop{\arg\min}_{\bfG_{n(2)}^\intercal \in \bbR^{R_n R_{n+1} \times I_n}} \|\bfT_{\ne n} \bfG_{[2]}^{\ne n} \bfG_{n(2)}^\intercal-\bfT_{\ne n} \bfX_{[n]}^\intercal \|_F,
	\end{equation*}
	such that 
	\begin{equation*}
	\mathbf{Pr}  \left( \| \bfG_{[2]}^{\ne n} \tilde{\bfG}_{n(2)}^\intercal - \bfX_{[n]}^\intercal \|_F =  (1 \pm \bigO{\varepsilon}) \mathop{\min} \| \bfG_{[2]}^{\ne n} \bfG_{n(2)}^\intercal- \bfX_{[n]}^\intercal \|_F \right) \geq 1-\eta.
	\end{equation*}
\end{theorem}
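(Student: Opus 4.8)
The plan is to reduce the statement to the standard error analysis of sketched least squares, following the proof of Theorem A.7 in \cite{ma2021FastAccurate}. For a generic overdetermined problem $\min_{\bfY}\|\bfA\bfY-\bfB\|_F$ with $\bfA$ of column rank $d$, optimal solution $\bfY^\ast$ and residual $\bfR^\ast=\bfA\bfY^\ast-\bfB$ (so $\bfA^\intercal\bfR^\ast=0$), the minimizer $\tilde\bfY=\argmin_{\bfY}\|\mcS\bfA\bfY-\mcS\bfB\|_F$ of the sketched problem satisfies $\|\bfA\tilde\bfY-\bfB\|_F=(1\pm\bigO{\varepsilon})\|\bfR^\ast\|_F$ provided $\mcS$ has two properties: (i) it is a $(1\pm\tfrac{1}{2})$-subspace embedding for the $d$-dimensional column space of $\bfA$, and (ii) it satisfies the approximate matrix multiplication bound $\|\bfU^\intercal\mcS^\intercal\mcS\bfR^\ast\|_F^2\le\tfrac{\varepsilon}{d}\|\bfR^\ast\|_F^2$, where $\bfU$ is an orthonormal basis for the column space of $\bfA$ (the lower bound $(1-\bigO{\varepsilon})$ being automatic since $\|\bfR^\ast\|_F$ is the true minimum). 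This reduction is the classical one, see e.g.\ \cite{woodruff2014SketchingTool}. I would apply it with $\bfA=\bfG_{[2]}^{\ne n}$, $\bfB=\bfX_{[n]}^\intercal$, $\bfY=\bfG_{n(2)}^\intercal$, $d\le R_nR_{n+1}$, and $\mcS=\bfT_{\ne n}$.

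The crux is to verify (i) and (ii) for $\bfT_{\ne n}$ with the stated $m$. This is exactly where the moment analysis of TensorSketch enters (Avron--Nguyen--Woodruff, as used in \cite{diao2018sketching,ma2021FastAccurate}): since $\bfT_{\ne n}$ is the order-$q$ TensorSketch assembled from $q=N-1$ independent CountSketches whose component hash maps are $3$-wise independent and whose component sign maps are $4$-wise independent, the induced hash/sign pair retains enough independence that the second- and fourth-moment bounds available for ordinary CountSketch carry over, at the price of one extra factor $3^{q}=3^{N-1}$ from the higher moments of the tensor-structured hash. Quantitatively, $m=\bigO{3^{N-1}d^2/\eta}$ rows yield property (i) with constant distortion and failure probability $\bigO{\eta}$, while $m=\bigO{3^{N-1}d/(\varepsilon^2\eta)}$ rows yield property (ii); taking the larger of the two gives exactly $m=\bigO{(R_nR_{n+1}\,3^{N-1})(R_nR_{n+1}+1/\varepsilon^2)/\eta}$, the bound in the statement. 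A union bound over the two bad events together with the reduction above then yield the claimed $(1\pm\bigO{\varepsilon})$ residual with probability at least $1-\eta$.

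The single point deserving care --- and the only place our setting departs from \cite{ma2021FastAccurate} --- is that there the coefficient matrix is a genuine Kronecker product of factor matrices, whereas here $\bfG_{[2]}^{\ne n}$ is the mode-$2$ unfolding of the subchain product \cref{eq:subchain_new}, which is \emph{not} a Kronecker product of matrices. This is harmless: the subspace-embedding and approximate-matrix-multiplication guarantees for TensorSketch hold for an \emph{arbitrary} fixed subspace of the prescribed dimension, so only the column dimension $d=R_nR_{n+1}$ of $\bfG_{[2]}^{\ne n}$ and the order $q=N-1$ of the sketch control $m$; the tensor structure of $\bfT_{\ne n}$ is used only to \emph{compute} the sketch cheaply via \Cref{prop:ts_tr}, never in the probabilistic estimates. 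One must also mind the index-ordering convention (little-endian here versus big-endian in \cite{ma2021FastAccurate}, cf.\ \Cref{rem:tensorsketch}), but this is only a relabelling of the $\prod_{j\ne n}I_j$ coordinates and is invisible to Frobenius norms. Thus the main obstacle is entirely the TensorSketch moment computation that produces the $3^{N-1}$ factor; the rest is an appeal to the standard sketched-least-squares lemma and bookkeeping.
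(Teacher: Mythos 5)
Your argument is correct and coincides with what the paper relies on: the paper gives no proof of \cref{thm:tr_als_ts} beyond citing Theorem A.7 of \cite{ma2021FastAccurate}, and that theorem is proved exactly by the reduction you describe (subspace embedding plus approximate matrix multiplication for the order-$(N-1)$ TensorSketch, with the $3^{N-1}$ factor from the Avron--Nguyen--Woodruff moment bounds). Your observation that the obliviousness of these guarantees makes the non-Kronecker structure of $\bfG_{[2]}^{\ne n}$ irrelevant is precisely the point that justifies the paper's ``slight restatement.''
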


Now we consider the compliexiy analysis of \Cref{alg:tr_ts_als}. Specifically, with the same assumptions as in \Cref{sec:srft_cost}, we have the following results.

\begin{itemize}
	\item \textbf{\LineRef{line:ts_S}: Generating the $N$ CountSketch matrices.} It costs $\bigO{NI}$.
	\item \textbf{\LineRef{line:ts_TX}: Computing the TensorSketch of the input tensor.} It costs $\bigO{Nnnz(\tensor{X})}$.
	\item \textbf{\LineRef{line:ts_init}: Initializing cores.} It is the same as TR-ALS, which costs $\bigO{NIR^2}$.
	\item \textbf{\LineRef{line:ts_TG}: Computing the TensorSketch of the subchain tensor.} It costs
	{\begin{align*}
		&(N-1)nnz(\tensor{G}_j)+(N-1)mR^2 \log(m)+(N-2)mR^3+mR^2 \log(m) \\
		&= \bigO{Nnnz(\tensor{G}_j)+NmR^2 \log(m)+NmR^3}
		\end{align*}}
	\item \textbf{\LineRef{line:ts_ls}: Solving the least squares problem.} Similar to TR-ALS, it costs $\bigO{ImR^2}$ per inner loop iteration, i.e., $\bigO{NImR^2}$ per outer loop iteration.
\end{itemize}

As a result, the overall leading order complexity of TR-TS-ALS is $$\bigO{Nnnz(\tensor{X}) + it \cdot NImR^2},$$
where ``$it$'' denotes the number of outer loop iterations. So, TR-TS-ALS is very applicable to sparse tensor. For dense tensor, similar to TR-KSRFT-ALS and TR-KSRFT-ALS-Premix, the dominant cost of \Cref{alg:tr_ts_als} also  appears in the upfront costs, i.e., the \LineRef{line:ts_TX}.

\begin{remark}
	The leading order computational complexities of the algorithms involved in this paper are summarised in \Cref{table:complexity}.
	
	\begin{table}[htbp] \footnotesize
	\centering
	\caption{ Comparison of leading order computational complexities}
	\label{table:complexity}
	\begin{tabular}{lll}
		\toprule
		Method & Complexity & Ignoring mixing tensor\\
		\midrule
		TR-ALS (\cite{zhao2016TensorRing}) 				& $\bigO{NIR^2 + it \cdot N I^N R^2}$  				& -----\\
		TR-ALS-Sampled (\cite{malik2020SamplingBased})	& $\bigO{NIR^4 + it \cdot NImR^2}$   				& -----\\
		TR-SRFT-ALS (Proposal) & $\bigO{I^N \log(I^N) + it \cdot NmI^2}$ & $\bigO{NIR^2 + it \cdot NI^2 m}$\\
		TR-SRFT-ALS-Premix (Proposal)			& $\bigO{I^N \log(I^N) + it \cdot NImR^2}$ 			& $\bigO{NIR^2 + it \cdot NImR^2}$\\
		TR-TS-ALS (Proposal)			& $\bigO{Nnnz(\tensor{X}) + it \cdot NImR^2}$   				& 
		-----\\
		\bottomrule
	\end{tabular}
\end{table}

	As before,  ``$it$'' in this table denotes the number of outer loop iterations. 
	From \Cref{table:complexity}, we can find that the complexities of TR-SRFT-ALS and TR-SRFT-ALS-Premix are lower than that of the regular TR-ALS, but are higher compared with TR-ALS-Sampled. 
	However, if we don't consider the complexity of mixing the tensor, our methods will have some advantages over TR-ALS-Sampled. 
	For TR-TS-ALS, its complexity can be lower than that of TR-ALS-Sampled for sparse tensor.
\end{remark}

\section{Numerical results}
\label{sec:numerical}
To test our proposed methods, we choose two methods as the baselines.
The first one is TR-ALS, with which we want to show the advantages of the randomized algorithms for TR decomposition for big data. 
The second one is TR-ALS-Sampled from \cite{malik2020SamplingBased} because it is the state-of-the-art randomized method for TR decomposition; see the detailed comparison of this method with others given in \cite{malik2020SamplingBased}. Meanwhile, we have also used the same sampling strategy from TR-ALS-Sampled in TR-KSRFT-ALS and TR-KSRFT-ALS-Premix. 
For TR-ALS and TR-ALS-Sampled, the functions \texttt{tr\_als.m} and \texttt{tr\_als\_sampled.m} from \cite{malik2020SamplingBased} are used in the specific experiments. These two functions are available at \url{https://github.com/OsmanMalik/tr-als-sampled}. Additionally, we also use the MATLAB Tensor Toolbox \cite{tensortoolbox}.

For the initialization of the involved algorithms, as discussed in \cite{battaglino2018PracticalRandomized} on ALS and randomized ALS for CP decomposition, TR-ALS also requires a good starting point to ensure good performance, but the randomized variants gain no obvious advantage from the same initial guess. So, in our experiments, we use random initialization as mentioned in the analysis of computational complexity for randomized algorithms. For TR-ALS, we can choose the results of TR-SVD as the initialization. However, for fairness, we also use random initialization.

In addition, all experiments are run on Matlab R2020b on a computer with an Intel Xeon W-2255 3.7 GHz CPU and 256 GB memory, and all numerical results and plotted quantities are the average over 10 runs.

\subsection{Synthetic data}
\label{ssec:synthetic_data}
Our experimental method for synthetic tensors can be summarized in the following two stages.

\paragraph{Preparation Stage} In this stage, we  use TR-ALS to determine the maximum number of iterations for termination of various algorithms, which will be used as the only termination criterion in \emph{Experimental Stage}.
Specifically, we run TR-ALS until the iterations are larger than $M = 500$ or the relative error is smaller than $\varepsilon = 1 \times 10^{-6}$ and record the iterations $T$. Here, the relative error is defined by 
$$\frac{\| \TR \left( \{\tensor{G}'_n\}_{n=1}^N \right) - \tensor{X} \|_F}{\| \tensor{X} \|_F},$$ 
where the TR-cores $\tensor{G}'_n$ are computed by TR-ALS. This relative error is also used to measure the quality of various randomized algorithms for TR decomposition in the subsequent experiments. This is because we have no way to know the true cores in most applications. 

\paragraph{Experimental Stage} In this stage, we use $2T$ as the terminationcriterion to run TR-ALS/TR-ALS-Sampled/TR-KSRFT-ALS/TR-TS-ALS.
For TR-ALS-Sampled/TR-KSRFT-ALS/TR-TS-ALS, we set the embedding size $m$ to be started at $J_{init}$ and incremented by $J_{inc}$ until $J_{fin}$. 
In the specific experiments, we set $J_{init}=500$, $J_{inc}=250$, and $J_{fin}=5000$.
Thus, we can obtain the corresponding relative errors and running time at each fixed embedding size, and the vectors of relative errors and running time for all the embedding sizes.
As for TR-ALS, it doesn't need the embedding size and its relative error and running time are almost fixed in each experiment. 
Therefore, with respect to the increase of embedding size, we can plot figures to reflect the variation of relative errors and running time of different algorithms.

Four numerical experiments are carried out to test our methods. 
All of our synthetic tensors of size $I \times I \times I$ are generated by creating 3 cores of size $R_{true} \times I \times R_{true}$, i.e., $\tensor{G}_1, \tensor{G}_2$ and $\tensor{G}_3$, and hence the tensor we are trying to recover is $\tensor{X}_{true} = \TR \left( \{\tensor{G}_n\}_{n=1}^3 \right)$. In the specific experiments, we set $R_{true}=10$ and use $R$ to denote the target rank run in the algorithms. To show the robustness and scalability of algorithms, the noise tensor $\tensor{N} \in \bbR^{I \times I \times I}$ with entries drawn from a standard normal distribution is added into the true tensor.
Then the observed tensor is  of the following form 
\begin{equation*}
\tensor{X} = \tensor{X}_{true} + noise\left( \frac{\| \tensor{X}_{true} \|}{\| \tensor{N} \|}\right) \tensor{N},
\end{equation*}
where the parameter $noise$ is the amount of noise. For the sparse tensor, we will only add the noise into the non-zero entries.

In the following, we present the specific way to generate the data and the corresponding numerical results for each of the four experiments.

In the first experiment, we generate a $500 \times 500 \times 500$ tensor as done in \cite{malik2020SamplingBased}. Specifically, 3 cores of size $10 \times 500 \times 10$ with entries drawn independently from a standard normal distribution are first created and then one entry for each core is chosen uniformly and set to 20 to increase the coherence. Finally, the true tensor can be formed using TR decomposition.

 \begin{figure}[htbp] 
 \centering 
 \subfloat[$noise = 0$.]{\includegraphics[scale=0.3]{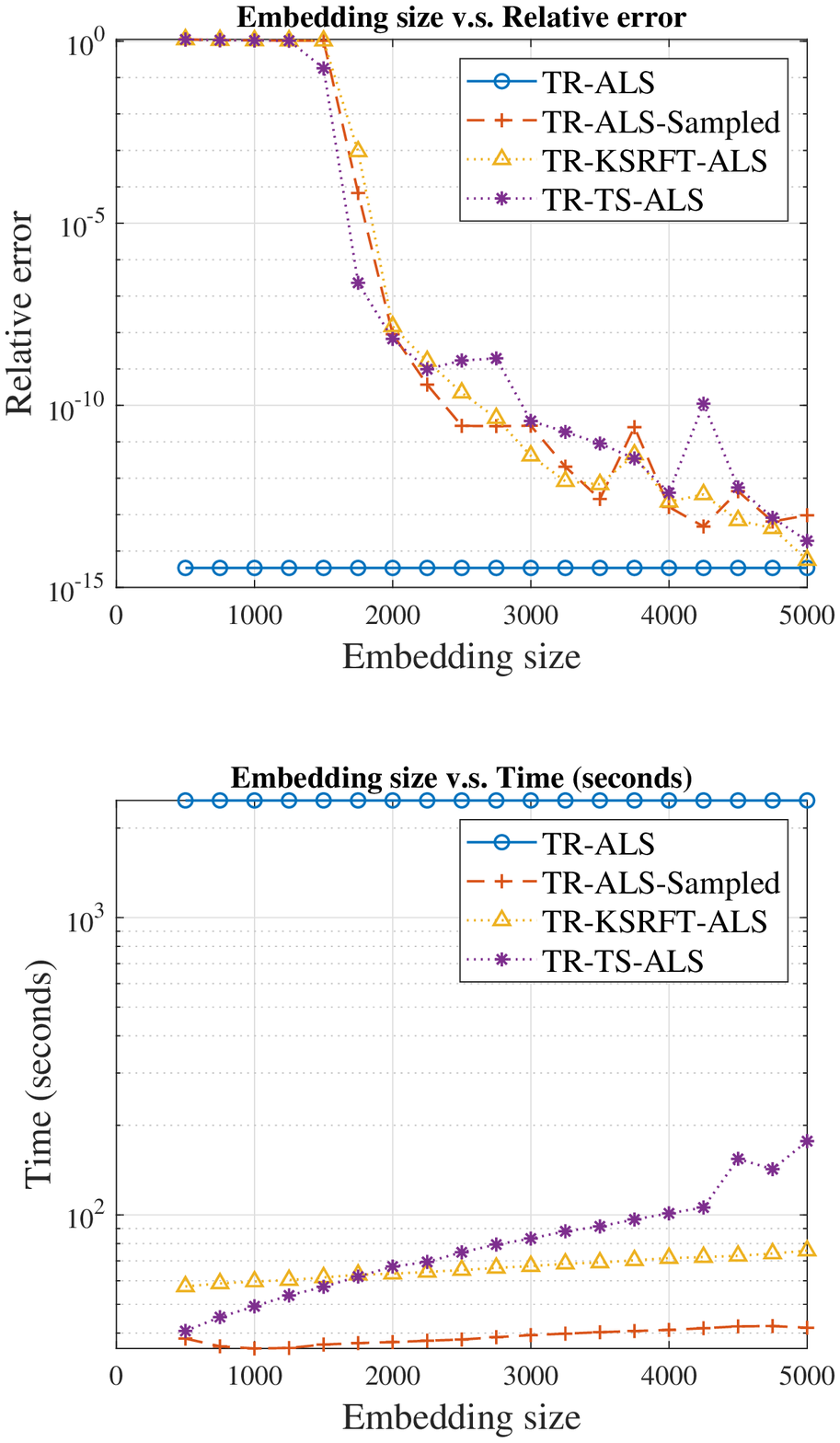}} 
 \subfloat[$noise = 0.01$.]{\includegraphics[scale=0.3]{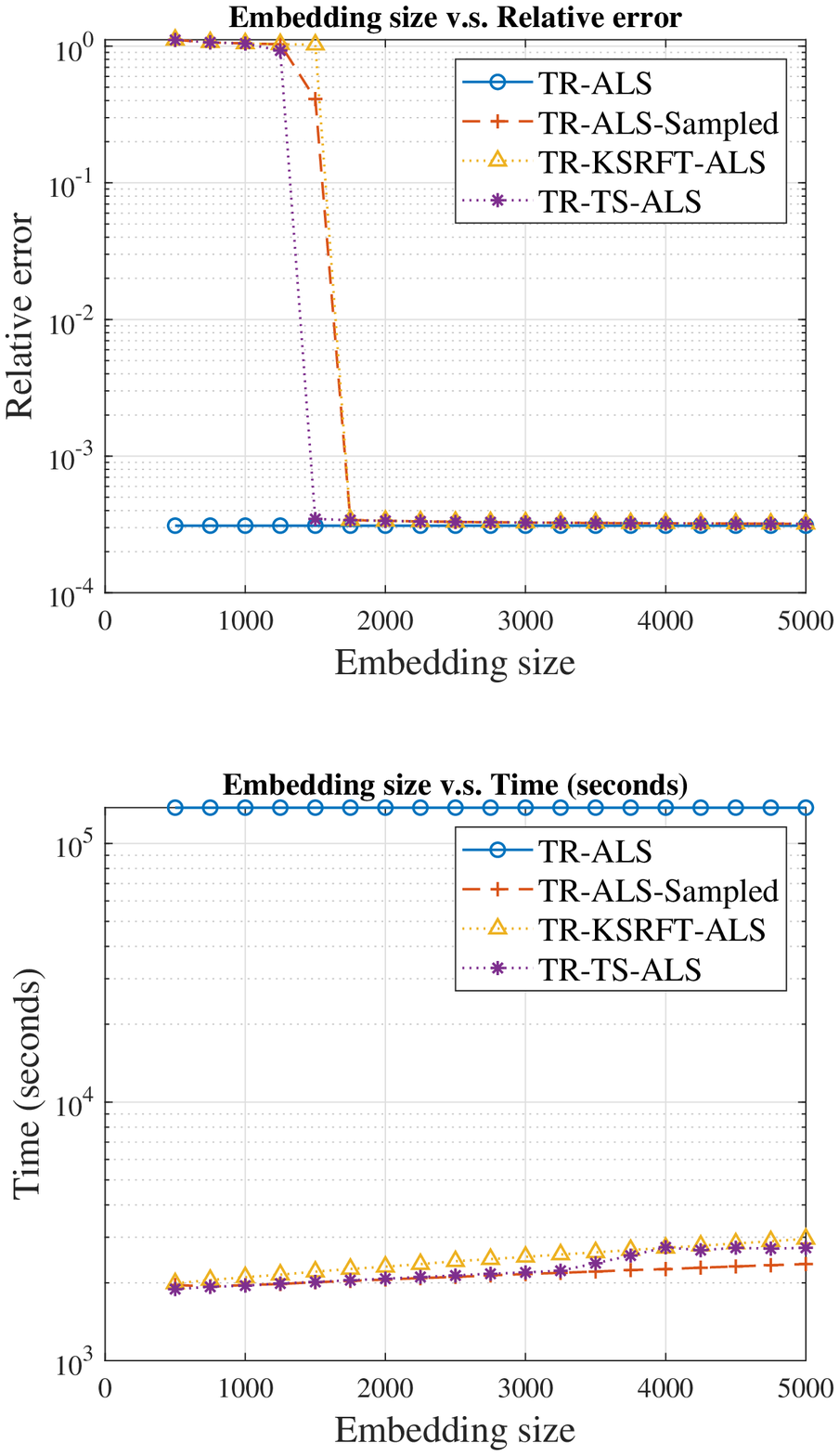}} 
\subfloat[$noise = 0.1$.]{\includegraphics[scale=0.3]{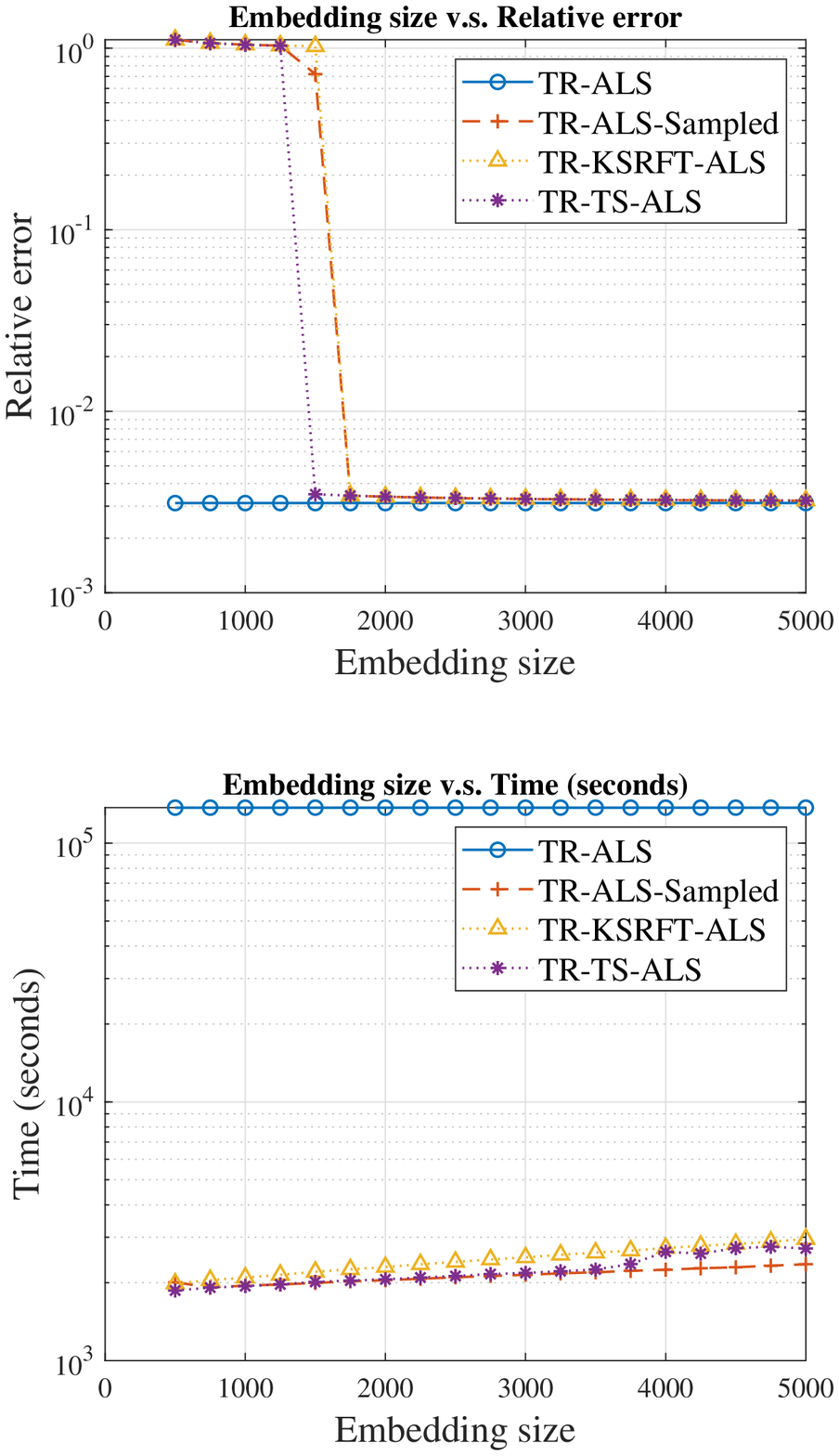}} 
 \caption{Embedding sizes v.s. relative errors and running time (seconds) of the first synthetic experiment with true and target ranks $R_{true}=R=10$ and different noises.} 
 \label{fig:syn1} 
 \end{figure}

\Cref{fig:syn1} shows the numerical results on relative errors and running time for this data with or without noise. 
From the relative errors shown in the top half of the figure, we can see that TR-KSRFT-ALS and TR-TS-ALS match, and in some cases surpass, the performance of TR-ALS-Sampled, and when the embedding size increases to a certain value, these three methods can achieve similar errors as TR-ALS.
For the running time shown in the bottom half of the figure, our two methods and TR-ALS-Sampled have almost the same performance for the case with noise, and are much better than TR-ALS. For the case without noise, our methods perform a little worse than TR-ALS-Sampled, however, still significantly outperform TR-ALS.
In addition, when the noise increases, our algorithms are still efficient, which implies that they may be robust to noise.

In the second experiment, we consider the $500 \times 500 \times 500$ sparse tensor, which is created by 3 cores whose non-zero entries are drawn from a standard normal distribution. Specifically, the 3 cores of size $10 \times 500 \times 10$ are generated by using the Matlab function \texttt{sptenrand([10,500,10], density)}, where $0 \leq \texttt{density} \leq 1$ denotes the sparsity. So the core tensor contains approximately $\texttt{density} \times 50000$ normal distributed non-zero entries. In the specific experiments, we set \texttt{density} = 0.05.

 \begin{figure}[htbp] 
	\centering 
	\subfloat[$noise = 0$.]{\includegraphics[scale=0.3]{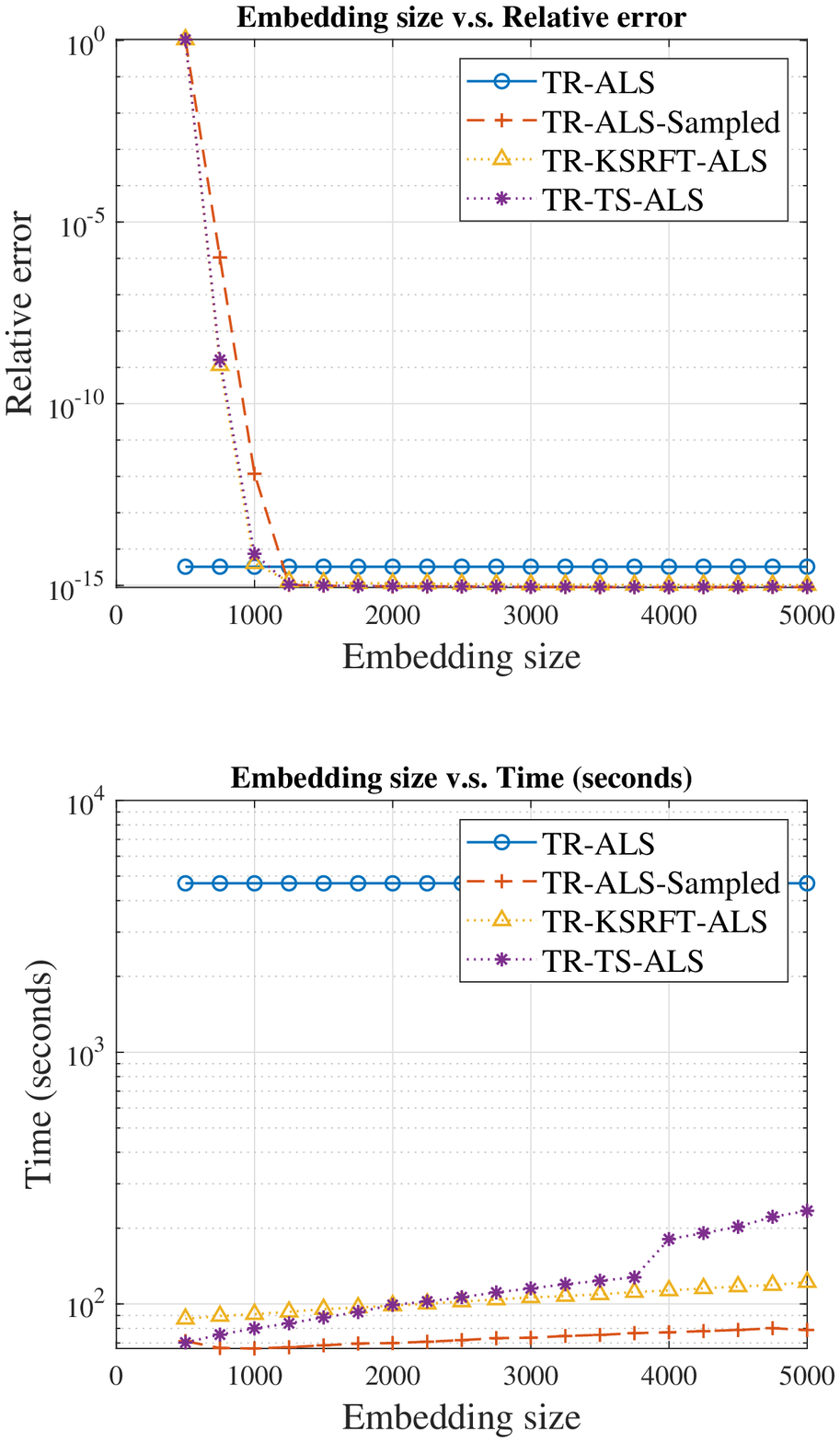}} 
	\subfloat[$noise = 0.01$.]{\includegraphics[scale=0.3]{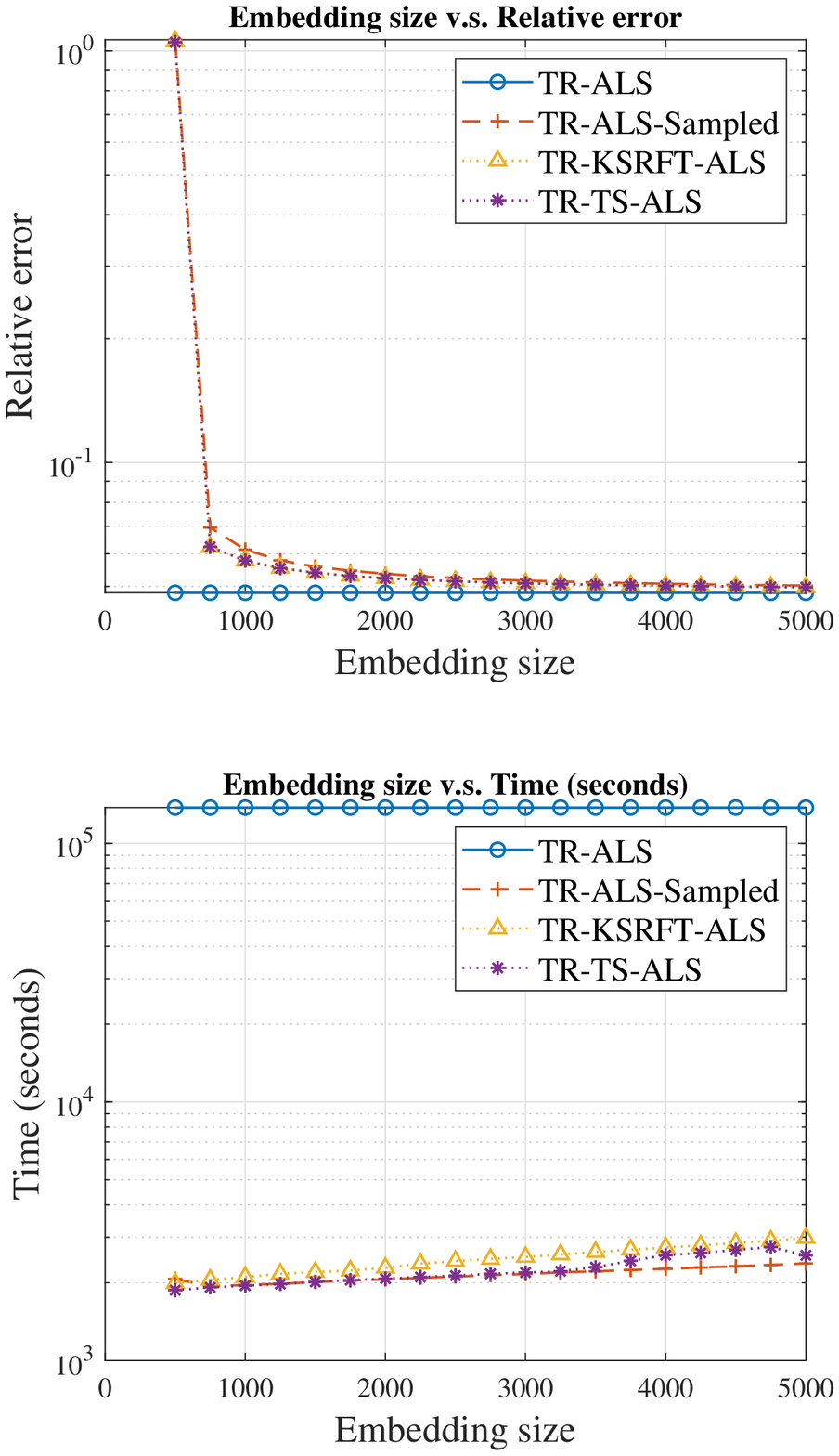}} 
	\subfloat[$noise = 0.1$.]{\includegraphics[scale=0.3]{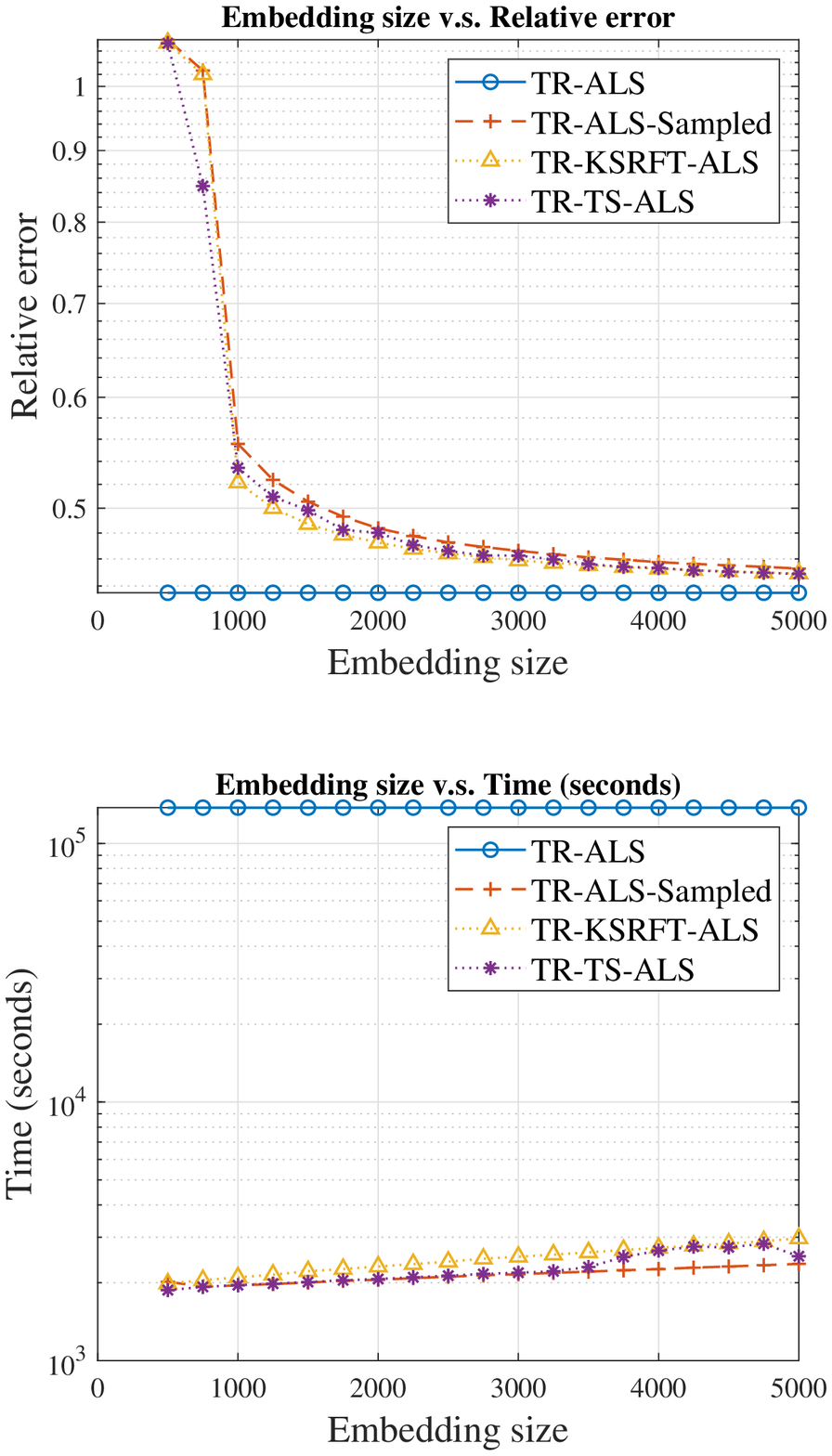}} 
	\caption{Embedding sizes v.s. relative errors and running time (seconds) of the second synthetic experiment with true and target ranks $R_{true}=R=10$ and different noises. }
	\label{fig:syn2} 
\end{figure}

\Cref{fig:syn2} shows the numerical results of this experiment. The conclusions from these results are similar to the ones from the first experiment. That is, TR-KSRFT-ALS, TR-TS-ALS and TR-ALS-Sampled have similar performance in relative errors and running time for the data with noise, and if the data has no noise, our methods are a little worse in running time compared with TR-ALS-Sampled. For all the cases, the above three methods always perform much better than TR-ALS in running time. Furthermore, our methods also show superior robustness. 

In the third experiment, we use the $500 \times 500 \times 500$ tensor generated by a method similar to the one 
in Appendix C.1 of \cite{larsen2020PracticalLeverageBased} which is used to create data for testing the algorithms for CP decomposition. 
Specifically, we first create 3 cores of size $10 \times 500 \times 10$ with entries drawn independently from a standard normal distribution and then
set the first three columns of the mode-2 unfolding matrix of each core to be zero. Finally, a few nonzero elements are added to those zero columns. Hence, there are two user-specified parameters in this kind of data: spread and magnitude. $spread$ means how many nonzero elements are added to each column. For example, $spread$ = 15 means that the first 15 rows in the first column, the second 15 rows in the second column, and the third 15 rows in the third column are nonzero. $magnitude$ means the size of the added nonzero elements. In the specific experiments, we set $spread$ = 15, $magnitude = I/4-10$, and do a change on the third core further. 
That is, we set all the rows except the first 15 ones of the mode-2 unfolding matrix to be zero. 

The numerical results for this data are shown in
\Cref{fig:syn3}. They are a little different from the ones from the previous two experiments.
Our algorithms perform better, especially for the case with noise,  than TR-ALS-Sampled in terms of the relative errors under almost the same running time. Moreover, to achieve the similar errors as TR-ALS, the embedding sizes of our algorithms are much smaller than that of TR-ALS-sampled. 
We plot the leverage scores of the classical mode-2 unfolding matrices of the original and transformed TR-cores in \Cref{fig:box3}, which shows that the original cores indeed have some special fibers. This is consistent with the analysis in \Cref{rem:outlinerr}. 

 \begin{figure}[htbp] 
	\centering 
	\subfloat[$noise = 0$.]{\includegraphics[scale=0.3]{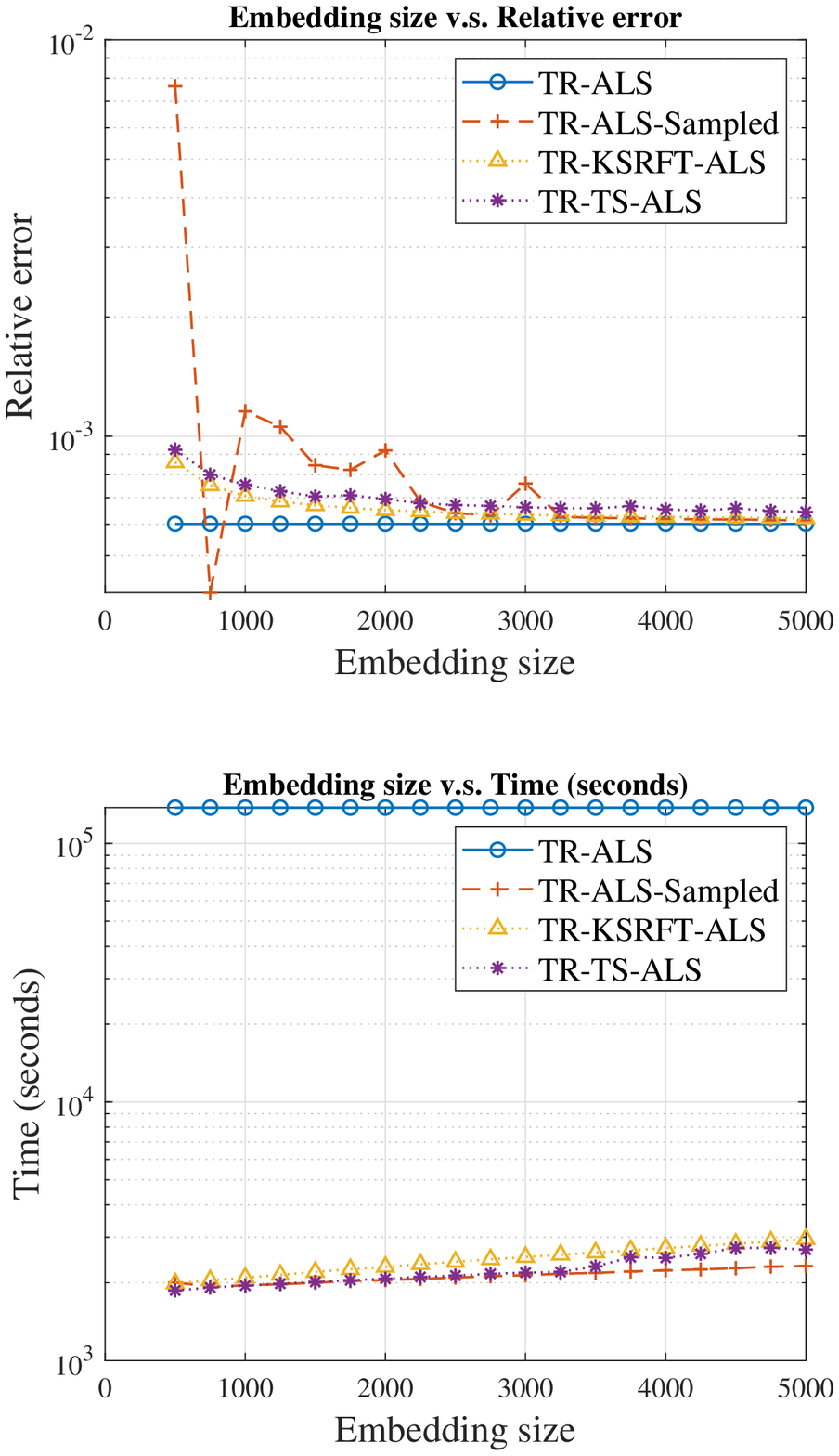}} \label{subfig:syn3a}
	\subfloat[$noise = 0.01$.]{\includegraphics[scale=0.3]{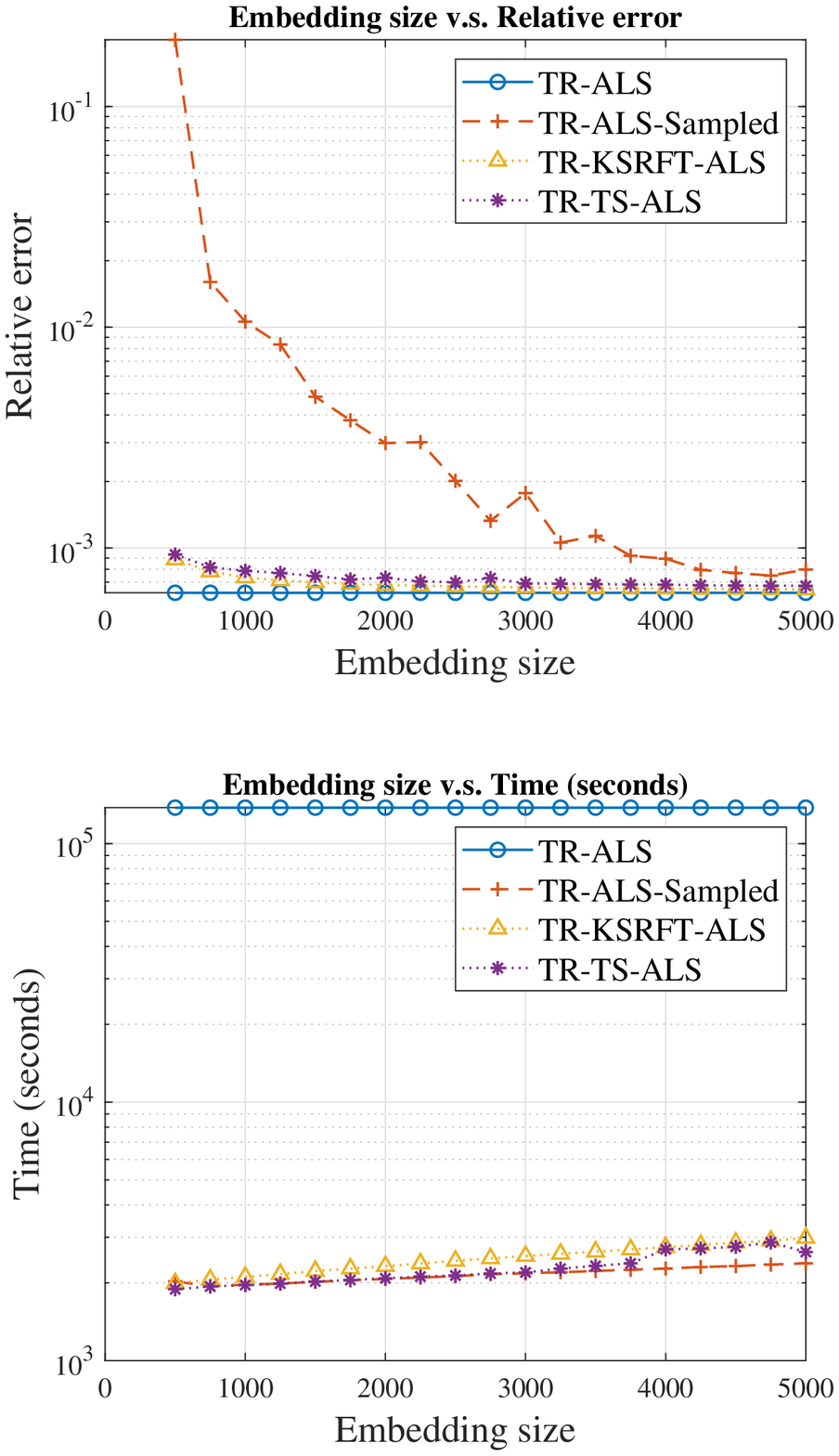}} 
	\subfloat[$noise = 0.1$.]{\includegraphics[scale=0.3]{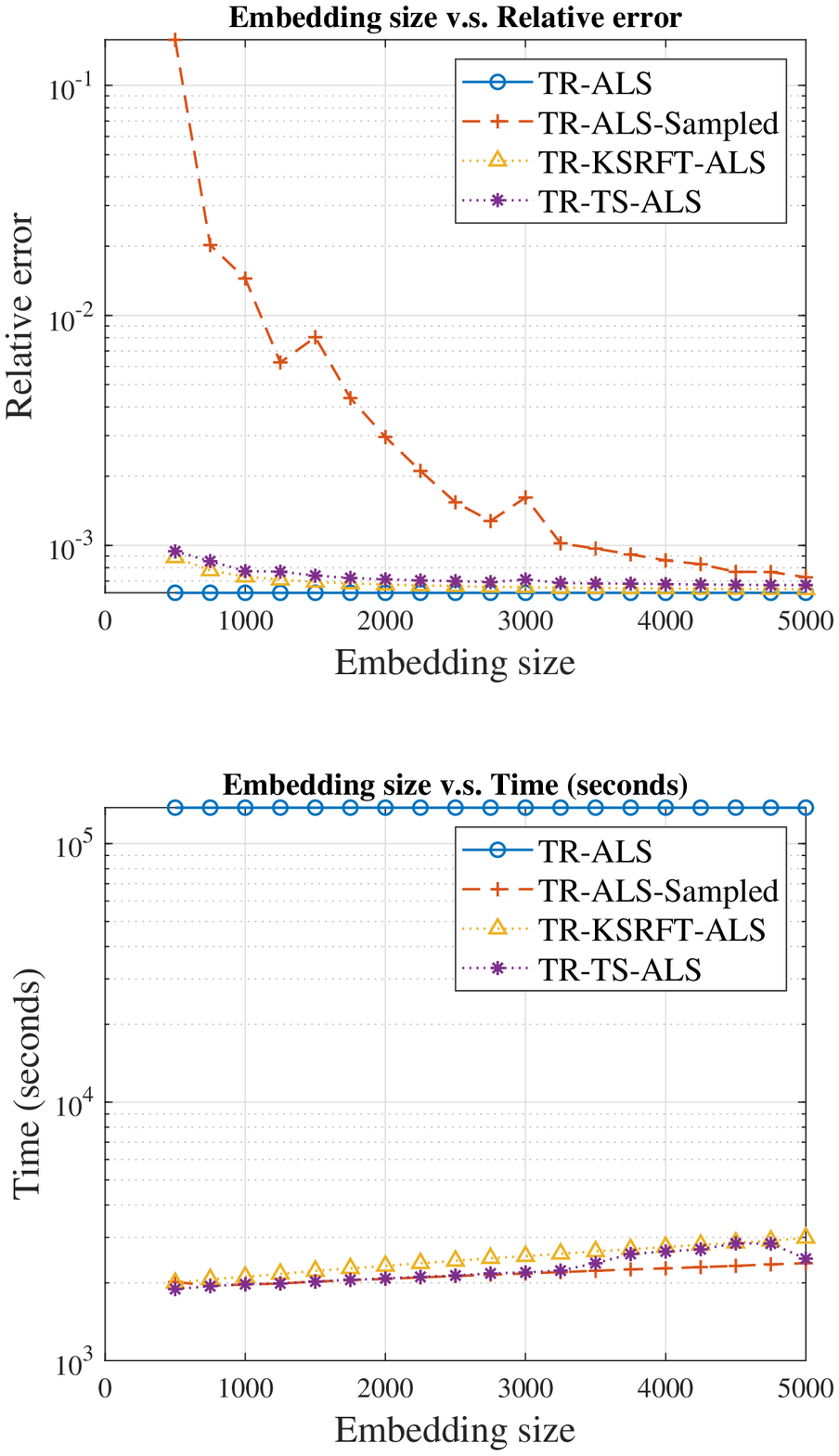}} 
	\caption{Embedding sizes v.s. relative errors and running time (seconds) of the third synthetic experiment with true and target ranks $R_{true}=R=10$ and different noises.}
	\label{fig:syn3} 
\end{figure}

\begin{figure}[htbp]
		\centering
		\includegraphics[width=1 \textwidth]{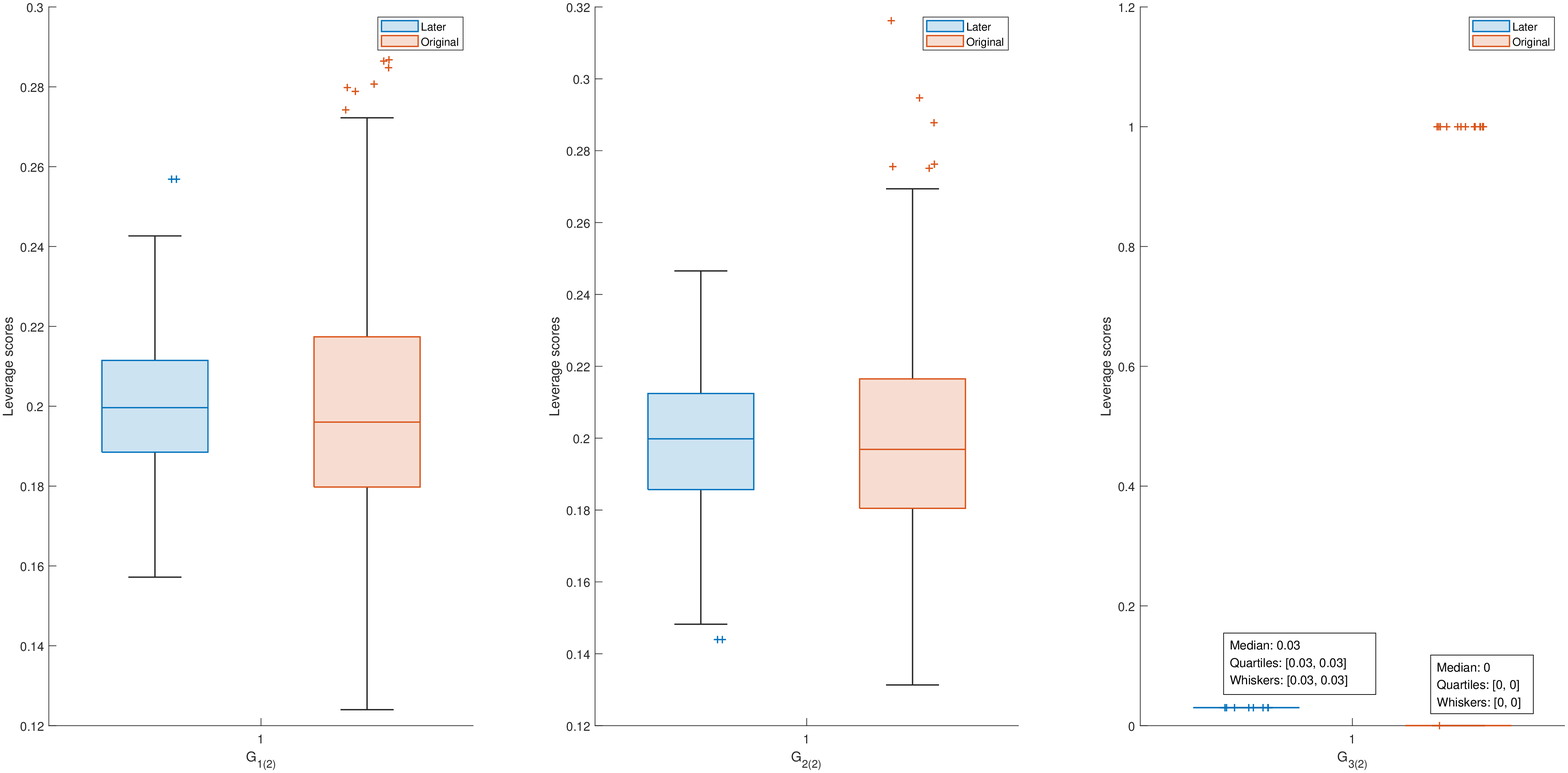}
	\caption{Box plot of leverage scores on the original (right) and transformed (left) TR-cores. }
	\label{fig:box3}
\end{figure}
 
 The fourth experiment is mainly used to test \Cref{alg:pre_tr_srft_als} (TR-KSRFT-ALS-Premix). The complex tensor of size $500 \times 500 \times 500$ is generated in the same way as the first experiment except that the cores are complex. \Cref{fig:syn4} shows the relative errors and running time for this data with different noises, which is in good agreement with the results shown in the first experiment, and the difference between the running time for data without noise is also not remarkable.
  So, we can conclude that \Cref{alg:pre_tr_srft_als} can perform well and is stable under noise for complex data. 
 
 \begin{figure}[htbp] 
	\centering 
	\subfloat[$noise = 0$.]{\includegraphics[scale=0.3]{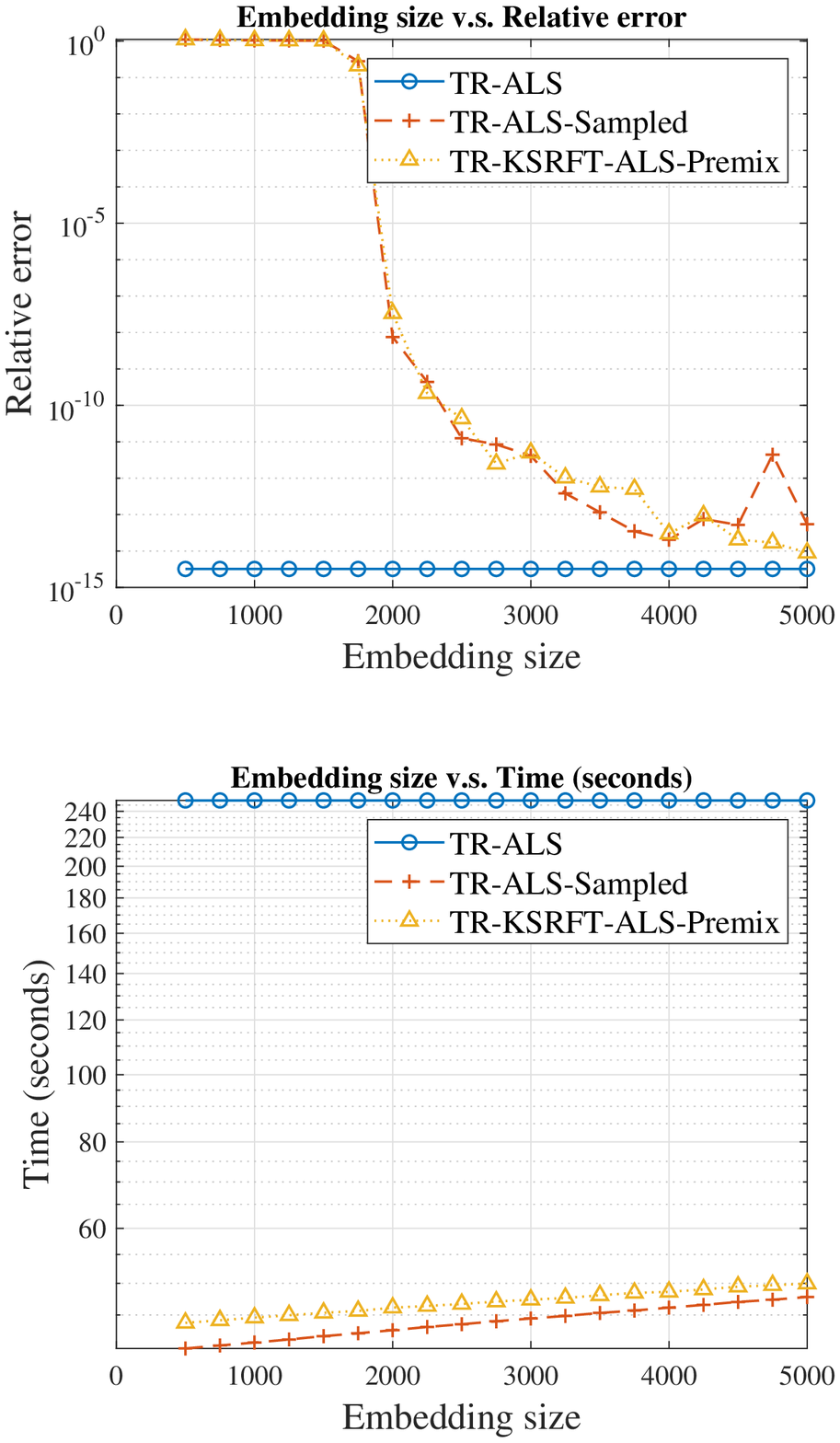}} 
	\subfloat[$noise = 0.01$.]{\includegraphics[scale=0.3]{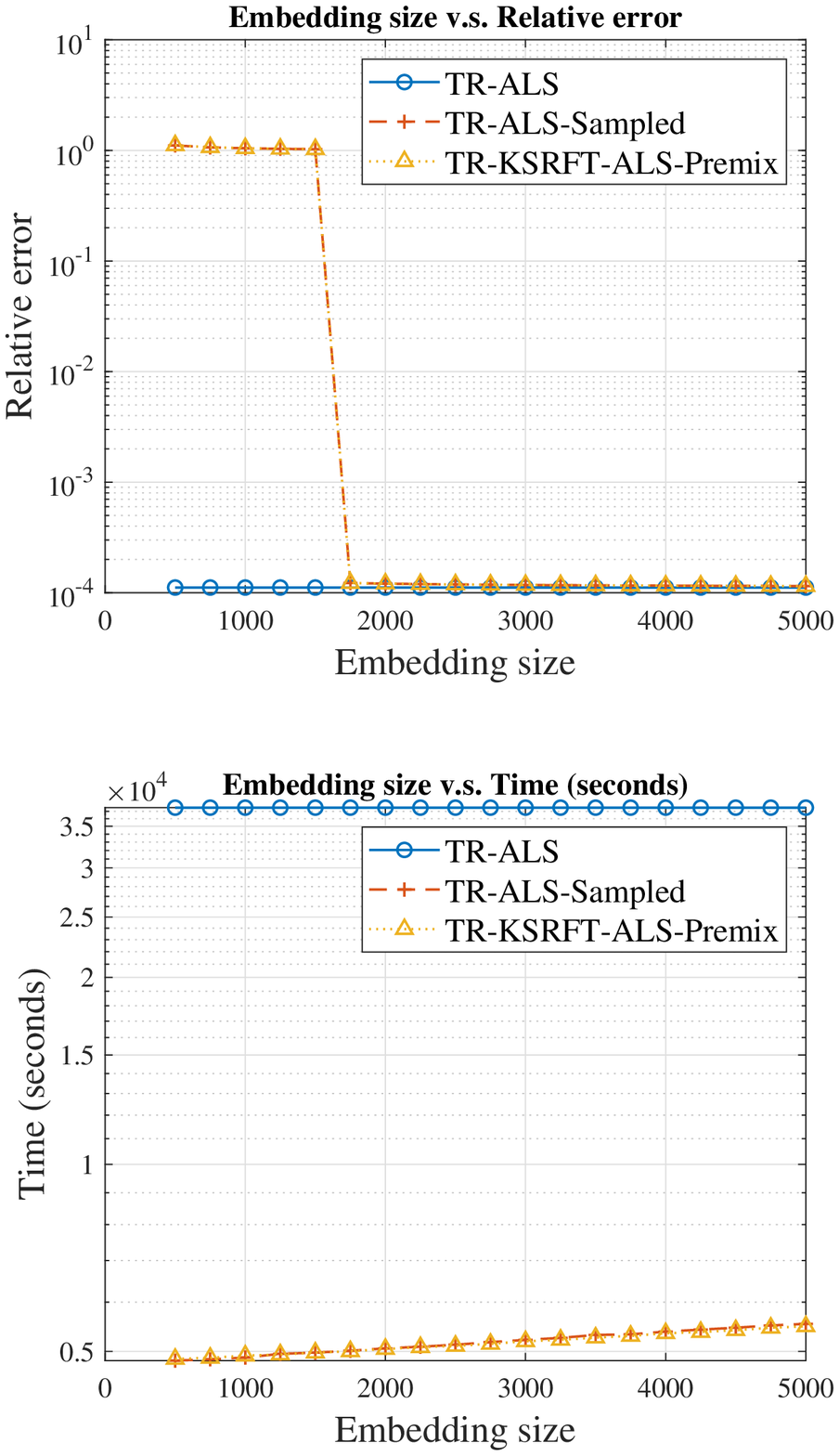}} 
	\subfloat[$R = 10, noise = 0.1$.]{\includegraphics[scale=0.3]{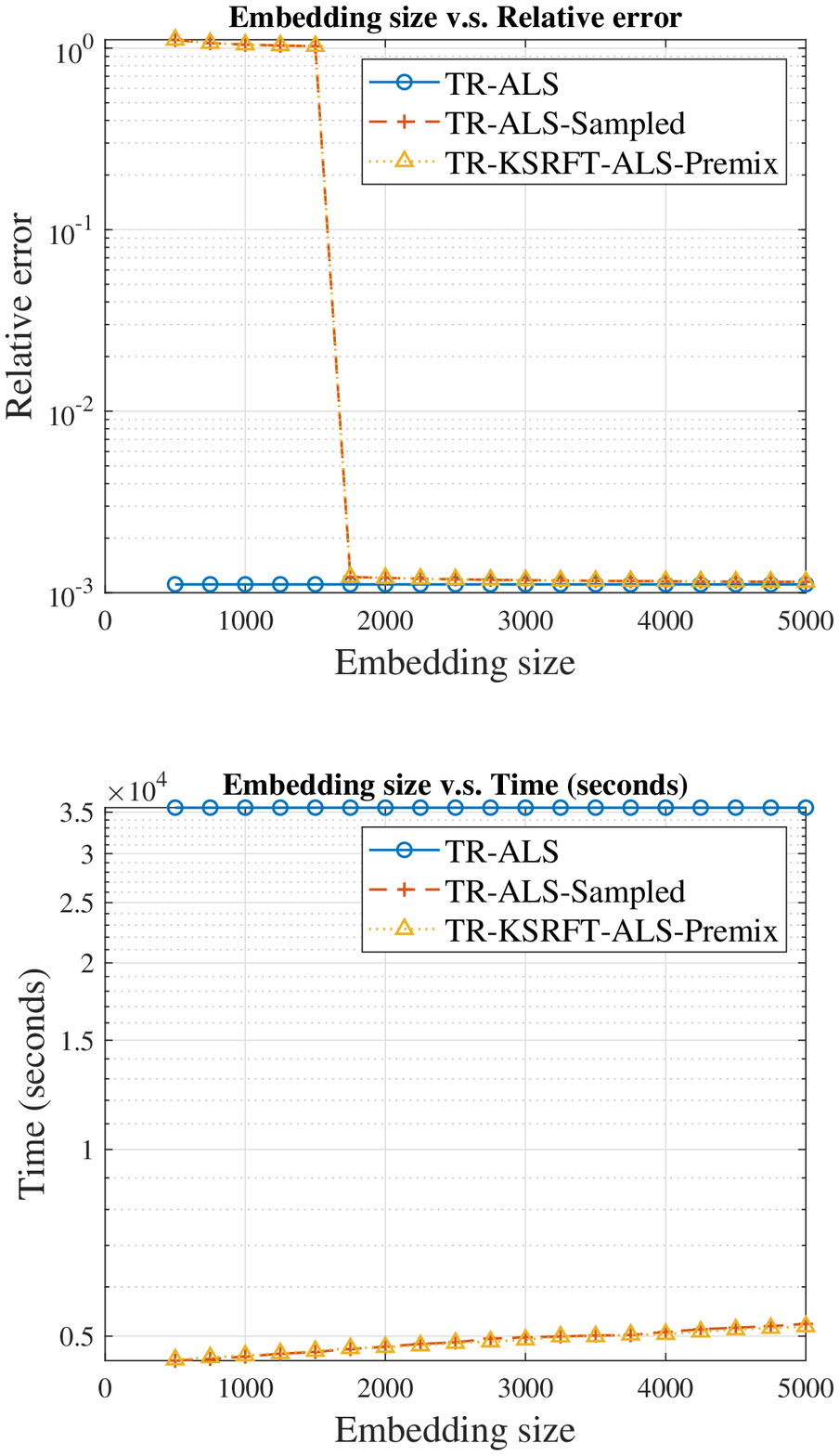}} 
	\caption{Embedding sizes v.s. relative errors and running time (seconds) of the fourth synthetic experiment with true and target ranks $R_{true}=R=10$ and different noises. }
	\label{fig:syn4} 
\end{figure}

\subsection{Real data}
We consider five real data tensors whose brief information is listed in \Cref{table:realdata} including specific links  provided in footnotes.

\begin{table}[htbp]
	\centering
	\caption{Size and type of real datasets.} 
	\label{table:realdata}
	\begin{tabular}{lll}
		\toprule
		Dataset & Size & Type\\
		\midrule
		Indian Pines \tablefootnote{\url{http://www.ehu.eus/ccwintco/index.php?title=Hyperspectral_Remote_Sensing_Scenes} \label{web:indian}} & $145 \times 145 \times 220$ & Hyperspectral \\
		SalinasA. \textsuperscript{\ref{web:indian}} & $83 \times 86 \times 224$ & Hyperspectral \\
		C1-vertebrae \tablefootnote{\url{https://isbweb.org/data/vsj/}} & $512 \times 512 \times 47$ & CT Images \\
		Uber.Hour \tablefootnote{\url{http://frostt.io/} \label{web:uber}} & $183 \times 1140 \times 1717$ & Sparse \\
		Uber.Date \textsuperscript{\ref{web:uber}} & $24 \times 1140 \times 1717$ & Sparse \\
		\bottomrule
	\end{tabular}
\end{table}

The first two datasets are composed of 3-order tensors containing hyperspectral images, where
the first two orders are the image height and width, and the third one is the number of spectral bands. 
The third dataset consists of $512 \times 512 \times 47$ tensors where the $512 \times 512$ frontal slice of each tensor is a 2-dimensional slice of the C1-vertebrae. These images were obtained from the Laboratory of Human Anatomy and Embryology, University of Brussels (ULB), Belgium.
The last two datasets are extracted from the Uber tensor which is a 4-order tensor of size $183 \times 24 \times 1140 \times 1717$ with 3,309,490 nonzeros (0.038\%). The entry $(i, j, k, l)$ denotes the number of pickups on date $i$, hour $j$, at latitude $k$ and longitude $l$, in New York city during the period April–August 2014. 
Due to the limitation of experimental equipment, we divide it into two parts with respect to date and hour respectively, i.e., the Uber.Date and Uber.Hour. The former contains the number of pickups on hour $j$, at latitude $k$ and longitude $l$ for a day, i.e., the entry is $(j, k, l)$, and the latter contains the number of pickups on date $i$, at latitude $k$ and longitude $l$ for an hour, i.e., the entry is $(i, k, l)$.

Considering that the real data always has irregular dimensions, we will not use the same experimental methods as in the second stage in synthetic experiments. Here, we mainly compare the running time and the embedding size $m$ for the same number of iterations and almost the same relative errors. 
Specifically, we first find the maximum iterations according to the method in the first stage of synthetic experiments with $M = 100$ and $\varepsilon = 1 \times 10^{-3}$. Then, for TR-ALS, we can get the running time and relative error; for other algorithms, we try the embedding size $m$ incrementally using the fixed maximum of iterations until the error is close to the 1.1 times of the TR-ALS error. 
The size $m$ is started at  $J_{init} = 2R^2$ and incremented by $J_{inc} = 100$, and hence $J_{fin} = J_{init} + J_{inc} \times num$, where the target rank $R$ is a priori estimate and is different for different data. 
Thus, if we record the number of increases for the embedding size, i.e., $num$, we can report the final embedding size $m$. Meanwhile, we can record the running time and the final error for each algorithm. The specific numerical results are presented in \Cref{tab:real_ex}.

\begin{table}[htbp]
\caption{Decomposition results for real datasets with different target ranks.}
\label{tab:real_ex}
\resizebox{\textwidth}{!}{%
\begin{tabular}{clllll}
\toprule
\multicolumn{2}{c}{Method}                                                                                                                          & TR-ALS    & TR-ALS-Sampled & TR-KSRFT-ALS & TR-TS-ALS \\ \midrule
\multirow{4}{*}{\begin{tabular}[c]{@{}c@{}}Indian Pines \\ ($R = 20$)\end{tabular}} & Error                                                         & 0.0263    & 0.0289         & 0.0289      & 0.0289    \\
                                                                                    & Time                                                          & 32.9536   & 13.7424        & 12.3571     & 12.0229   \\
                                                                                    & $num$                                                         & -         & 120            & 53          & 73        \\
                                                                                    & \begin{tabular}[c]{@{}l@{}}Embedding\\ size ($m$)\end{tabular} & -         & 12800          & 6100        & 8100      \\ \midrule
\multirow{4}{*}{\begin{tabular}[c]{@{}c@{}}SalinasA.\\ ($R = 15$)\end{tabular}}     & Error                                                         & 0.0066    & 0.0069         & 0.0073      & 0.0073    \\
                                                                                    & Time                                                          & 4.0225    & 2.4166         & 1.8510      & 2.2868    \\
                                                                                    & $num$                                                         & -         & 54             & 23          & 30        \\
                                                                                    & \begin{tabular}[c]{@{}l@{}}Embedding\\ size ($m$)\end{tabular} & -         & 5850           & 2750        & 3450      \\ \midrule
\multirow{4}{*}{\begin{tabular}[c]{@{}c@{}}C1-vertebrae \\ ($R = 25$)\end{tabular}} & Error                                                         & 0.0804    & 0.0882         & 0.0883      & 0.0883    \\
                                                                                    & Time                                                          & 409.7951  & 128.3391       & 101.7646    & 156.5089  \\
                                                                                    & $num$                                                         & -         & 228            & 88          & 217       \\
                                                                                    & \begin{tabular}[c]{@{}l@{}}Embedding\\ size ($m$)\end{tabular} & -         & 24050          & 10050       & 22950     \\ \midrule
\multirow{4}{*}{\begin{tabular}[c]{@{}c@{}}Uber.Hour \\ ($R = 15$)\end{tabular}}    & Error                                                         & 0.7530    & 0.8246         & 0.8272      & 0.8274    \\
                                                                                    & Time                                                          & 869.1631  & 64.7240        & 39.0307     & 45.3829   \\
                                                                                    & $num$                                                         & -         & 230            & 40          & 47        \\
                                                                                    & \begin{tabular}[c]{@{}l@{}}Embedding\\ size ($m$)\end{tabular} & -         & 23450          & 4450        & 5150      \\ \midrule
\multirow{4}{*}{\begin{tabular}[c]{@{}c@{}}Uber.Date \\ ($R = 18$)\end{tabular}}    & Error                                                         & 0.3864    & 0.4226         & 0.4246      & 0.4239    \\
                                                                                    & Time                                                          & 1452.1900 & 159.1936       & 51.3584     & 113.8542  \\
                                                                                    & $num$                                                         & -         & 320            & 46          & 147       \\
                                                                                    & \begin{tabular}[c]{@{}l@{}}Embedding\\ size ($m$)\end{tabular} & -         & 32648          & 5248        & 15348     \\ \bottomrule
\end{tabular}%
}
\end{table}

Compared with the results of part of the experiments on synthetic data, the performance of our methods on these real data tensors is a little better. Except for TR-TS-ALS on the data C1-vertebrae which still yields substantial speedups over TR-ALS, our methods always outperform TR-ALS-Sampled in running time under almost the same relative errors. This may be mainly because, compared with TR-ALS-Sampled, the embedding sizes of our algorithms are much smaller.

\section{Concluding remarks}\label{sec:con}
In this paper, we propose two randomized algorithms, i.e., TR-KSRFT-ALS and TR-TS-ALS, for TR decomposition using KSRFT and TensorSketch. They have pretty decent performance in terms of accuracy and computing time compared with the state-of-the-art method, i.e., TR-ALS-Sampled. To achieve these, two new tensor products are defined and their properties are investigated. These new products and properties are of interest in their own right. For example, they can be used to devise the hybrid algorithms for TR decomposition based on stochastic gradient descent and randomized preconditioning. We will present this work in a subsequent paper. Another future work is to try to
reduce the computational cost of the mixing tensor since it usually dominates the total cost of the algorithm.

\bibliographystyle{siamplain}
\bibliography{simods_references}

\begin{thebibliography}{10}

\bibitem{affleck2004ValenceBond}
{\sc I.~Affleck, T.~Kennedy, E.~H. Lieb, and H.~Tasaki}, {\em Valence bond
  ground states in isotropic quantum antiferromagnets}, Comm. Math. Phys., 115
  (1988), pp.~477--528, \url{https://doi.org/10.1007/BF01218021}.

\bibitem{ahmadi-asl2020RandomizedAlgorithms}
{\sc S.~Ahmadi-Asl, A.~Cichocki, A.~H. Phan, M.~G. {Asante-Mensah}, M.~M.
  Ghazani, T.~Tanaka, and I.~V. Oseledets}, {\em Randomized algorithms for fast
  computation of low rank tensor ring model}, Mach. Learn.: Sci. Technol., 2
  (2020), p.~011001, \url{https://doi.org/10.1088/2632-2153/abad87}.

\bibitem{tensortoolbox}
{\sc B.~W. Bader, T.~G. Kolda, et~al.}, {\em Tensor toolbox for matlab}, 2021,
  \url{https://www.tensortoolbox.org} (accessed 2021/04/05).
\newblock Version 3.2.1.

\bibitem{battaglino2018PracticalRandomized}
{\sc C.~Battaglino, G.~Ballard, and T.~G. Kolda}, {\em A practical randomized
  {CP} tensor decomposition}, SIAM J. Matrix Anal. Appl., 39 (2018),
  pp.~876--901, \url{https://doi.org/10.1137/17M1112303}.

\bibitem{che2019RandomizedAlgorithms}
{\sc M.~Che and Y.~Wei}, {\em Randomized algorithms for the approximations of
  {Tucker} and the tensor train decompositions}, Adv. Comput. Math., 45 (2019),
  pp.~395--428, \url{https://doi.org/10.1007/s10444-018-9622-8}.

\bibitem{che2020ComputationLow}
{\sc M.~Che, Y.~Wei, and H.~Yan}, {\em The computation of low multilinear rank
  approximations of tensors via power scheme and random projection}, SIAM J.
  Matrix Anal. Appl., 41 (2020), pp.~605--636,
  \url{https://doi.org/10.1137/19M1237016}.

\bibitem{che2021EfficientRandomized}
{\sc M.~Che, Y.~Wei, and H.~Yan}, {\em An efficient randomized algorithm for
  computing the approximate {Tucker} decomposition}, J. Sci. Comput., 88
  (2021), pp.~1--29, \url{https://doi.org/10.1007/s10915-021-01545-5}.

\bibitem{cichocki2016TensorNetworks}
{\sc A.~Cichocki, N.~Lee, I.~V. Oseledets, A.~H. Phan, Q.~Zhao, and D.~P.
  Mandic}, {\em Tensor networks for dimensionality reduction and large-scale
  optimization: Part 1 low-rank tensor decompositions}, Found. Trends Mach.
  Learn., 9 (2016), pp.~249--429, \url{https://doi.org/10.1561/2200000059}.

\bibitem{cichocki2015TensorDecompositions}
{\sc A.~Cichocki, D.~Mandic, L.~De~Lathauwer, G.~Zhou, Q.~Zhao, C.~Caiafa, and
  H.~A. PHAN}, {\em Tensor decompositions for signal processing applications:
  From two-way to multiway component analysis}, IEEE Signal Process. Mag., 32
  (2015), pp.~145--163, \url{https://doi.org/10.1109/MSP.2013.2297439}.

\bibitem{clarkson2017LowRankApproximation}
{\sc K.~L. Clarkson and D.~P. Woodruff}, {\em Low-rank approximation and
  regression in input sparsity time}, J. ACM, 63 (2017), pp.~54:1--54:45,
  \url{https://doi.org/10.1145/3019134}.

\bibitem{diao2018sketching}
{\sc H.~Diao, Z.~Song, W.~Sun, and D.~P. Woodruff}, {\em Sketching for
  {Kronecker} product regression and p-splines}, in International Conference on
  Artificial Intelligence and Statistics, vol.~84, Playa Blanca, Lanzarote,
  Canary Islands, 2018, PMLR, pp.~1299--1308.

\bibitem{dolgov2014AlternatingMinimal}
{\sc S.~V. Dolgov and D.~V. Savostyanov}, {\em Alternating minimal energy
  methods for linear systems in higher dimensions}, SIAM J. Sci. Comput., 36
  (2014), pp.~A2248 --A2271, \url{https://doi.org/10.1137/140953289}.

\bibitem{drineas2017LecturesRandomized}
{\sc P.~Drineas and M.~W. Mahoney}, {\em Lectures on randomized numerical
  linear algebra}, in The Mathematics of Data, M.~W. Mahoney, J.~C. Duchi, and
  A.~C. Gilbert, eds., vol.~25 of IAS/Park City Mathematics Series,
  AMS/IAS/SIAM, 2018, pp.~1--48, \url{https://doi.org/10.1090/pcms/025/00829}.

\bibitem{drineas2011FasterLeast}
{\sc P.~Drineas, M.~W. Mahoney, S.~Muthukrishnan, and T.~Sarl{\'o}s}, {\em
  Faster least squares approximation}, Numer. Math., 117 (2011), pp.~219--249,
  \url{https://doi.org/10.1007/s00211-010-0331-6}.

\bibitem{gong2021markov}
{\sc T.~Gong, Y.~Dong, H.~Chen, B.~Dong, and C.~Li}, {\em Markov subsampling
  based huber criterion}, 2021, \url{https://arxiv.org/abs/2112.06134}.

\bibitem{gong2020RobustGradientBased}
{\sc T.~Gong, Q.~Xi, and C.~Xu}, {\em Robust gradient-based markov
  subsampling}, in Proceedings of the AAAI Conference on Artificial
  Intelligence, vol.~34(04), New York Hilton Midtown, USA, 2020, AAAI Press,
  pp.~4004--4011.

\bibitem{jin2021FasterJohnson}
{\sc R.~Jin, T.~G. Kolda, and R.~Ward}, {\em Faster {Johnson-Lindenstrauss}
  transforms via {Kronecker} products}, Inf. Inference, 10 (2021),
  pp.~1533--1562, \url{https://doi.org/10.1093/imaiai/iaaa028}.

\bibitem{kolda2009TensorDecompositions}
{\sc T.~G. Kolda and B.~W. Bader}, {\em Tensor decompositions and
  applications}, SIAM Rev., 51 (2009), pp.~455--500,
  \url{https://doi.org/10.1137/07070111X}.

\bibitem{larsen2020PracticalLeverageBased}
{\sc B.~W. Larsen and T.~G. Kolda}, {\em Practical leverage-based sampling for
  low-rank tensor decomposition}, 2020, \url{https://arxiv.org/abs/2006.16438}.

\bibitem{ma2021FastAccurate}
{\sc L.~Ma and E.~Solomonik}, {\em Fast and accurate randomized algorithms for
  low-rank tensor decompositions}, in Advances in Neural Information Processing
  Systems, M.~Ranzato, A.~Beygelzimer, Y.~Dauphin, P.~Liang, and J.~W. Vaughan,
  eds., vol.~34, Curran Associates, Inc., 2021, pp.~24299--24312.

\bibitem{malik2021MoreEfficient}
{\sc O.~A. Malik}, {\em More efficient sampling for tensor decomposition},
  2021, \url{https://arxiv.org/abs/2110.07631}.

\bibitem{malik2018LowRankTucker}
{\sc O.~A. Malik and S.~Becker}, {\em Low-rank {Tucker} decomposition of large
  tensors using {TensorSketch}}, in Advances in Neural Information Processing
  Systems, vol.~31, Montr\'{e}al, Canada, 2018, Curran Associates, Inc.,
  pp.~10117--10127.

\bibitem{malik2020FastRandomized}
{\sc O.~A. Malik and S.~Becker}, {\em Fast randomized matrix and tensor
  interpolative decomposition using {CountSketch}}, Adv. Comput. Math., 46
  (2020), \url{https://doi.org/10.1007/s10444-020-09816-9}.

\bibitem{malik2020GuaranteesKronecker}
{\sc O.~A. Malik and S.~Becker}, {\em Guarantees for the {Kronecker} fast
  {Johnson-Lindenstrauss} transform using a coherence and sampling argument},
  Linear Algebra Appl., 602 (2020), pp.~120--137,
  \url{https://doi.org/10.1016/j.laa.2020.05.004}.

\bibitem{malik2020SamplingBased}
{\sc O.~A. Malik and S.~Becker}, {\em A sampling-based method for tensor ring
  decomposition}, in Proceedings of the 38th International Conference on
  Machine Learning, vol.~139, Virtual Event, 2021, PMLR, pp.~7400--7411.

\bibitem{martinsson2020RandomizedNumerical}
{\sc P.-G. Martinsson and J.~A. Tropp}, {\em Randomized numerical linear
  algebra: Foundations and algorithms}, Acta Numer., 29 (2020), pp.~403--572,
  \url{https://doi.org/10.1017/S0962492920000021}.

\bibitem{minster2020RandomizedAlgorithms}
{\sc R.~Minster, A.~K. Saibaba, and M.~E. Kilmer}, {\em Randomized algorithms
  for low-rank tensor decompositions in the {Tucker} format}, SIAM J. Math.
  Data Sci., 2 (2020), pp.~189--215, \url{https://doi.org/10.1137/19M1261043}.

\bibitem{oseledets2011TensorTrainDecomposition}
{\sc I.~V. Oseledets}, {\em Tensor-train decomposition}, SIAM J. Sci. Comput.,
  33 (2011), pp.~2295--2317, \url{https://doi.org/10.1137/090752286}.

\bibitem{paghrasmus2013CompressedMatrix}
{\sc R.~Pagh}, {\em Compressed matrix multiplication}, ACM Trans. Comput.
  Theory, 5 (2013), pp.~1--17, \url{https://doi.org/10.1145/2493252.2493254}.

\bibitem{perez-garcia2007MatrixProduct}
{\sc D.~Perez-Garcia, F.~Verstraete, M.~M. Wolf, and J.~I. Cirac}, {\em Matrix
  product state representations}, Quantum Inform. Comput., 7 (2007),
  pp.~401--430.

\bibitem{sidiropoulos2017tensor}
{\sc N.~D. Sidiropoulos, L.~De~Lathauwer, X.~Fu, K.~Huang, E.~E. Papalexakis,
  and C.~Faloutsos}, {\em Tensor decomposition for signal processing and
  machine learning}, IEEE Trans. Signal Process., 65 (2017), pp.~3551--3582,
  \url{https://doi.org/10.1109/TSP.2017.2690524}.

\bibitem{vandecappelle2021NumericalAlgorithms}
{\sc M.~Vandecappelle}, {\em Numerical Algorithms for Tensor Decompositions},
  PhD thesis, Arenberg Doctoral School, Faculty of Engineering Science, 2021.

\bibitem{woodruff2014SketchingTool}
{\sc D.~P. Woodruff}, {\em Sketching as a tool for numerical linear algebra},
  Found. Trends Theor. Comput. Sci., 10 (2014), pp.~1--157,
  \url{https://doi.org/10.1561/0400000060}.

\bibitem{yuan2019RandomizedTensor}
{\sc L.~Yuan, C.~Li, J.~Cao, and Q.~Zhao}, {\em Randomized tensor ring
  decomposition and its application to large-scale data reconstruction}, in
  ICASSP 2019 - 2019 IEEE International Conference on Acoustics, Speech and
  Signal Processing (ICASSP), Brighton Conference Centre Brighton, U.K., 2019,
  IEEE, pp.~2127--2131.

\bibitem{zhao2016TensorRing}
{\sc Q.~Zhao, G.~Zhou, S.~Xie, L.~Zhang, and A.~Cichocki}, {\em Tensor ring
  decomposition}, 2016, \url{https://arxiv.org/abs/1606.05535}.

\bibitem{zhou2014DecompositionBig}
{\sc G.~Zhou, A.~Cichocki, and S.~Xie}, {\em Decomposition of big tensors with
  low multilinear rank}, 2014, \url{https://arxiv.org/abs/1412.1885}.

\end{thebibliography}
\end{sloppypar}
\end{document}